\documentclass[11pt,a4paper,reqno]{amsart}
\usepackage{amsmath,amssymb,amsthm}
\IfFileExists{lmodern.sty}{\usepackage[T1]{fontenc}\usepackage{lmodern}}{}
\usepackage[margin=25mm]{geometry}
\usepackage{booktabs}
\usepackage[hyphens]{url}
\usepackage{hyperref}
\hypersetup{colorlinks=true,linkcolor=blue,citecolor=blue,urlcolor=blue,
pdftitle={Non-torsion of traced CM points in the 8 case},
pdfsubject={Yin's three-conjugate construction with Frobenius and algebraic descent}}
\allowdisplaybreaks[2]
\numberwithin{equation}{section}
\newtheorem{theorem}{Theorem}[section]
\newtheorem{proposition}[theorem]{Proposition}
\newtheorem{lemma}[theorem]{Lemma}
\newtheorem{conjecture}[theorem]{Conjecture}
\theoremstyle{remark}

\newcommand{\Q}{\mathbb Q}\newcommand{\Z}{\mathbb Z}
\newcommand{\F}{\mathbb F}\newcommand{\OO}{\mathcal O}
\newcommand{\Gal}{\operatorname{Gal}}
\newcommand{\Tr}{\operatorname{Tr}}

\newcommand{\barvarpi}{\bar{\varpi}}

\begin{document}
\title{A concise proof of the 8 case of Sylvester's conjecture}
\author{Hongbo Yin}
\date{}

\begin{abstract}
The 150 years old Sylvester's conjecture asserts that every prime $p\equiv 4,7,8\mod 9$ is the sum of two rational cubes. Recently,  a proof of this old problem is obtained by the work of mine for the case 4,7 \cite{Yin} and the work of Burungale-Tian for the case 8 \cite{BT}. In this paper we give a shorter proof by deep mining my result in \cite{Yin} combining with the ideas of my previous work on case 8 in \cite{Yin8}. In particular, we use the ordinary trace and the Kronecker congruence of modular functions, but do not use the $\sqrt{-3}$-division boundary and Fermat isogeny as in \cite{BT}. 
\end{abstract}

\address{School of Mathematics, Shandong University, Jinan, Shandong,  250100,
China}
\email{yhb@sdu.edu.cn}

\maketitle

\section{Introduction}


A classical theorem of Pepin, Lucas, and Sylvester in 1879 states that if odd prime \(p\equiv 2,5\pmod 9\), then neither \(p\) nor \(p^2\) is a sum of two rational cubes. Since then, the complementary classes of primes attract the interest of people. In modern point of view, $p$ is a sum of two nonzero rational cubes if and only if $E_p(\Q)$ has positive rank 
where
\begin{equation}\label{eq:intro:En}
E_p:\quad y^2=x^3+\frac{p^2}{4}
\end{equation}
is \(3\)-isogenous over \(\Q\) to the diagonal cubic \(x^3+y^3=p\).
Selmer's \(3\)-descent \cite{Sel51} gives the rank upper bound
\[
\operatorname{rank}_{\Z} E_p(\Q)\le
\begin{cases} 
0,& p\equiv 2,5\pmod 9,\\
1,& p\equiv 4,7,8\pmod 9,\\
2,& p\equiv 1\pmod 9,
\end{cases}
\]
and the root-number computation of Birch and Stephens \cite{BS66} shows that the sign in the functional equation of the Hasse--Weil \(L\)-function \(L(s,E_p)\) is \(-1\) precisely for \(p\equiv 4,7,8\pmod 9\). Thus the BSD conjecture predicts the following.

\begin{conjecture}[Sylvester]\label{conj:intro:sylvester}
Let \(p\equiv 4,7,8\pmod 9\) be prime. Then \(p\) is a sum of two rational cubes.
\end{conjecture}

The conjecture has a long history. Elkies announced a proof for \(p\equiv 4,7\pmod 9\) in 1994 \cite{Elk94}, but never published the details. The first unconditional results were obtained by Dasgupta and Voight for the classes \(4,7\), under a supplementary cubic-residue condition \cite{DV09,DV18}. In \cite{Yin8}, the author gave a similar but weaker result for the case $8$.
More recently, Yin \cite{Yin} gave a complete proof of the \(4,7\) classes and Burungale-Tian \cite{BT} settled the class \(p\equiv 8\pmod 9\). In this paper, we give a simplification of the proof of Burungale-Tian using some ideas in \cite{Yin8}. 
The proof retains the auxiliary-prime construction and the cubic indivisibility mechanism underlying Burungale–Tian’s argument. It detects the required nonvanishing directly on an ordinary trace, using a modular function congruence. Tracing first to a cubic extension simplifies both the character-component calculation and the final formal-group argument.

\subsection*{Organization of the paper}
Section \ref{sec:CM} fixes notation, constructs the CM points, and states the main theorem and the strategy of the proof. Section \ref{3-torsion} proves the ordinary trace is $3$-torsion. Section \ref{sec:frob} establishes the modular-function congruence and proves the ordinary trace is not $\sqrt{-3}$ divisible. Section \ref{sec:non-torsion} proves the main result. Section \ref{sec:example} gives explicit example \(p=17\).

\subsection*{Use of AI}
The ideas are due to the author while the details realization is due to AI. But the output of AI not readily readable by human, so the final simplification, reorganization and writing are due to the author. The example in Section \ref{sec:example} is computed by AI.



\section{The main result of this paper}
\label{sec:CM}

\subsection{The auxiliary prime and the fields}

Fix $\omega=\frac{-1+\sqrt{-3}}{2}$, write $K=\Q(\omega)$.
Choose an auxiliary prime and its primary factor satisfying
\begin{equation}\label{eq:aux}
q\equiv4\pmod9,\qquad q=\varpi\bar\varpi,\qquad
\varpi\equiv1\pmod{3\OO_K},\qquad
\left(\frac{\varpi}{p}\right)_3
=\left(\frac p\varpi\right)_3\ne1.
\end{equation}
By \cite[Lemma 2.1]{BT}, there exist infinitely many primes
\(q\) satisfying \eqref{eq:aux}. For the philosophy behind this choice which is explained by the Gross-Zagier formula, see \cite[Section 2 and 3]{BT}. Fix one such prime and
a primary factor \(\varpi\).

Write
\[
t^3=\varpi,\qquad L_{(\varpi)}=K(t),\qquad L_{(p)}=K(\sqrt[3]{p}),\qquad L_{(\varpi, p)}=L_{(\varpi)}\cdot L_{(p)}.
\]
Let \(H_p\) and \(H_{3p}\) be the ring class field of conductor \(p\) and \(3p\), respectively.
By \cite[Proposition 2.1]{Yin8}, we have
\[
[H_{3p}:K]=p+1,\qquad[H_p:K]=\frac{p+1}{3}=m,\qquad
L_{(p)}\subset H_p\subset H_{3p}.
\]
The cubic extension \(L_{(\varpi)}/K\) ramifies at
\((\varpi)\), whereas \(H_p/K\) is unramified there.
Therefore \(H_p\cap L_{(\varpi)}=K\).

Let
\[
\sigma_p\in\Gal(L_{(\varpi,p)}/L_{(\varpi)})
\]
be the generator defined by
\[
\sigma_p(t)=t,\qquad
\sigma_p(\sqrt[3]{p})=\omega\sqrt[3]{p}.
\]
Thus this group consists of \(1,\sigma_p,\sigma_p^2\).
Let \(\operatorname{Art}\) denote the Artin reciprocity map for $K$.
For \(\ell=3,q\), let \(\omega_\ell\) be the finite idele
whose component at every place above \(\ell\) is the image
of \(\omega\), and whose other components are \(1\).
In particular, \(\omega_q\) includes both places above \(q\).
Define
\[
\sigma_3=\operatorname{Art}(\omega_3),\qquad
\sigma_q=\operatorname{Art}(\omega_q)
\quad\text{in }\Gal(K^{\mathrm{ab}}/K).
\]
By \cite[Proposition 2.1]{Yin8}, their relevant restrictions are
\begin{equation*}
\sigma_q|_{H_{3p}}=1, \qquad \sigma_q(t)=\omega^2t\footnote{The actions of $\sigma_{\omega_{\varpi}}$ on $t$ for $q\equiv 4\mod 9$ and $q\equiv 7\mod 9$ should be exchanged in \cite[Proposition 2.1]{Yin}. This is due to a error in the final local-symbol calculation in the proof. This will be corrected in future version.},\qquad \sigma_3|_{H_p}=1,\qquad \sigma_3(t)=\omega t.
\end{equation*}
Moreover
\[
\bigl\langle\sigma_3|_{H_{3p}}\bigr\rangle
=\Gal(H_{3p}/H_p).
\]
Consequently, the product \(\sigma_3\sigma_q\) fixes
\(H_pL_{(\varpi)}\) and generates
\[
\Gal(H_{3p}L_{(\varpi)}/H_pL_{(\varpi)}),
\]
which has order three.

At last, \eqref{eq:aux} implies that \(T^3-\varpi\) is irreducible over
\(\F_{p^2}\). At \(p\), the extension \(L_{(\varpi)}/K\)
is unramified of residue degree three. We shall distinguish
\[
k=\F_{p^2}\quad\text{from}\quad \kappa=\F_{p^6}.
\]
In particular \(L_{(\varpi,p)}\), at its prime above \(p\),
has residue field \(\kappa\) and absolute ramification index
\begin{equation}\label{eq:e}
e=3.
\end{equation}

\subsection{The construction of CM points}

Set \(N=9q\) and
\[
\Gamma=\left\{
\begin{pmatrix}a&b\\c&d\end{pmatrix}\in\Gamma_0(N):
d\bmod q\text{ is a cube}\right\}.
\]
The modular curve \(X_\Gamma\) has its canonical model
over \(\Q\). Let \(f,f^c\) be the weight-two CM forms in \cite{Yin},
with coefficient conjugation indicated by \(c\).
For the period lattice \(\Lambda\) in \cite[(2.0.9)]{Yin},
put
\[
u(z)=2\pi i\int_{i\infty}^{z}f(w)\,dw.
\]
The analytic expression for the modular parametrization \(\varphi:X_\Gamma\to E_{\barvarpi}\) is
\begin{equation}\label{eq:param}
\varphi(z)=
\left(\wp_\Lambda(u(z)),
       \frac{\wp'_\Lambda(u(z))}{2}\right)
\in E_{\bar\varpi}.
\end{equation}
Here we ignore the Manin-Stevens constants since they are units due to \cite{Yin}. The conjugate expression gives
\(\varphi^c:X_\Gamma\to E_\varpi\).
Here we should notice that $f$ is constructed from the Hecke character of $E_{\varpi}$ while $f^c$ from $E_{\barvarpi}$.
Both $\varphi$ and $\varphi^c$ are defined over \(K\), send \(\infty\) to \(O\). 

Choose \(r\) with
\[
r^2-r+1\equiv0\pmod{3q},\qquad -r\equiv{\omega}\pmod{\varpi},
\]
and set
\begin{equation}\label{eq:parameters}
\tau_r=-\frac1{3(\omega+r)}.
\end{equation}
\cite[Theorem 4.2, Theorem 8.6]{Yin}
says,  \(\varphi^c(\tau_r)\) is $\sqrt{-3}$-torsion while $\varphi(\tau_r)$ is of infinite order. \cite[Remark 8.5 and Proposition 5.9]{Yin} says
\[
\varphi^c(\tau_r)=-\varphi^c(0)\ne O.
\]

Define the CM point
\[
P=\varphi^c(p\tau_r)\in E_\varpi(H_pL_{(\varpi)}).
\]
and the ordinary elliptic curve traces
\begin{align}
z&=\Tr_{H_pL_{(\varpi)}/L_{(\varpi,p)}}P,
\label{eq:z}\\
z_0&=\Tr_{H_pL_{(\varpi)}/L_{(\varpi)}}P
     =z+z^{\sigma_p}+z^{\sigma_p^2},
\label{eq:z0}
\end{align}
where
\[
\sigma_p(t)=t,\qquad \sigma_p(\sqrt[3]{p})=\omega \sqrt[3]{p}.
\]
As in \cite[(6.9)--(6.10)]{Yin8} define
\begin{align}
z_1&=z+[\omega^2]z^{\sigma_p}+[\omega]z^{\sigma_p^2},
\label{eq:z1}\\
z_2&=z+[\omega]z^{\sigma_p}+[\omega^2]z^{\sigma_p^2},
\label{eq:z2}
\end{align}
then
\begin{equation}
z_1^{\sigma_p}=[\omega]z_1,\qquad
z_2^{\sigma_p}=[\omega^2]z_2.
\label{eq:eigen}
\end{equation}
For \(i=1,2\), put
\begin{equation}\label{eq:twist}
\phi_i:E_\varpi\longrightarrow E_{p^i},\qquad
(x,y)\longmapsto
 \left(\frac{\sqrt[3]{p}^{2i}}{t^2}x,\frac{p^i}{\varpi}y\right).
\end{equation}
Then by \cite[Proposition 2.1]{Yin8}
\begin{equation}\label{eq:ordinary-final}
R_i=\phi_i(z_i)
=\Tr_{H_p/K}\bigl(\phi_i(P)\bigr)
\in E_{p^i}(K)
\end{equation}
Assume $R_i$ is not torsion. If the complex conjugate $\overline{R_i}=-R_i+T$ for some torsion point $T$ ($T$ is $K$, so is $\sqrt{-3}$-torsion) then $[\sqrt{-3}]R_i\in E_{p^i}(\mathbb{Q})$ is non-torsion. Otherwise $\overline{R_i}+R_i$ belongs to $E_{p^i}(\mathbb{Q})$ is non-torsion. Either case will be enough to prove $p^i$ is a rational cube sum. So the main result of this paper is the following theorem.

\begin{theorem}\label{thm:main}
For every prime \(p\equiv8\pmod9\), 
both \(z_1\) and \(z_2\) (and hence $R_1$ and $R_2$) have infinite order. In particular, both \(p\) and \(p^2\)
are sums of two nonzero rational cubes.
\end{theorem}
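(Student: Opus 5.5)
Following the organization announced in the introduction, the plan has three stages: a torsion computation, a congruence showing non-divisibility by $\sqrt{-3}$, and a formal-group argument that rules out torsion.

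\emph{Step 1: $z_0$ is $3$-torsion.} First show that the full trace $z_0=z+z^{\sigma_p}+z^{\sigma_p^2}$ is torsion, indeed killed by $3$ (or by $\sqrt{-3}$). Write $P=\varphi^c(p\tau_r)$ and use the Hecke relation $T_p$ on $X_\Gamma$. Since $p\equiv 8\pmod 9$ is inert in $K$, $a_p(f^c)=0$. Hence summing $\varphi^c$ over the $p+1$ points $(p\tau_r+j)/p$ together with $p\cdot p\tau_r$ expresses $\Tr_{H_{3p}L_{(\varpi)}/L_{(\varpi)}}\varphi^c(p\tau_r)$ in terms of $\varphi^c(\tau_r)$, up to a Frobenius twist. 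By \cite[Theorem 4.2]{Yin}, $\varphi^c(\tau_r)$ is $\sqrt{-3}$-torsion. Passing from $H_{3p}$ to $H_p$ uses $\sigma_3\sigma_q$: the trace over $H_{3p}L_{(\varpi)}/H_pL_{(\varpi)}$ is $1+\sigma_3\sigma_q+(\sigma_3\sigma_q)^2$, which acts on $P$ through the Atkin--Lehner/diamond action at $3$ and $q$. This gives $3P$ up to a $\sqrt{-3}$-torsion correction. Therefore $z_0$ is $3$-torsion.

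\emph{Step 2: $z$ is not $\sqrt{-3}$-divisible modulo torsion.} Reduce at the prime above $p$, whose residue field is $\kappa$. Apply the Kronecker congruence to the modular functions $x\circ\varphi^c$ and $y\circ\varphi^c$: for the CM point $p\tau_r$, the value $\varphi^c(p\tau_r)$ is congruent to $\mathrm{Frob}_p$ applied to $\varphi^c(\tau_r)$ (Eichler--Shimura, or Kronecker $\Phi_p(X,Y)\equiv(X^p-Y)(X-Y^p)$). Hence $\tilde P$ is computable from $\widetilde{\varphi^c(\tau_r)}=-\widetilde{\varphi^c(0)}$, a nonzero $\sqrt{-3}$-torsion point, which is preserved under reduction because $p\nmid 3$. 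Tracing, the reduction $\tilde z$ is a nonzero multiple, prime to $\sqrt{-3}$, of this torsion point. This gives $\tilde z\notin \sqrt{-3}\,\tilde E(\kappa)$ in the relevant component.

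\emph{Step 3: $z_1$ and $z_2$ have infinite order.} From $z_0$ torsion we get $3z=z_0+z_1+z_2$ together with the eigen-relations \eqref{eq:eigen}. Suppose $z_i$ is torsion. Then it lies in $E_\varpi(L_{(\varpi,p)})_{\rm tors}$, and by the eigen-condition it is essentially $\sqrt{-3}$-torsion, since the torsion over this field is small, computed via $\phi_i$ into $E_{p^i}(K)$. One then sees that $3z$, and hence $z$ up to $\sqrt{-3}$-power torsion, is controlled by $z_j$ with $j\neq i$. Concretely, $z_1-z_2=[\omega^2-\omega](z^{\sigma_p}-z^{\sigma_p^2})$ is divisible by $\sqrt{-3}$. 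Combining this with $z_0$ torsion shows $3z\equiv 2z_j$ modulo torsion, up to an explicit factor of $\sqrt{-3}$. This forces $z$ to be $\sqrt{-3}$-divisible modulo torsion in $E(L_{(\varpi,p)})$.

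To contradict Step 2 we must pass to the residue field. With $e=3$ from \eqref{eq:e}, the formal group $\hat E(\mathfrak m)$ has no nontrivial torsion (since $e<p-1$) and is handled by the logarithm. Divisibility in the formal group then transports divisibility to $\tilde E(\kappa)$, contradicting Step 2. \textbf{The main obstacle} is this last step: making precise that $\sqrt{-3}$-divisibility of $z$ in $E(L_{(\varpi,p)})$ modulo torsion descends to $\tilde E(\kappa)$. The plan is to use the $\sigma_p$-eigenspace decomposition: since $\sigma_p$ acts trivially on $\kappa$, reduction kills the $\omega$-eigencomponents up to $\sqrt{-3}$. Tracing to the cubic extension first should make this transparent.

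Finally, having $R_i$ non-torsion, the conjugation argument stated before Theorem~\ref{thm:main} yields $p$ and $p^2$ as cube sums.
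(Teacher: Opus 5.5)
\textbf{Step 2 fails, and it is the heart of the proof.} At the inert prime $p$, the Kronecker congruence conjugates coefficients ($a^p\equiv\bar a$ for $a\in\OO_K$). So it relates $\widetilde P=\widetilde{\varphi^c(p\tau_r)}$ to the Frobenius image of $\widetilde{P'}$, where $P'=\varphi(\tau_r)\in E_{\bar\varpi}$ is the \emph{non-torsion} point coming from the other parametrization. It does not relate $\widetilde P$ to $\varphi^c(\tau_r)$; the Frobenius image of that point even lies on the wrong curve $\widetilde E_{\bar\varpi}$. Your version would also defeat itself. With it, $\widetilde P$ would be $\sqrt{-3}$-torsion, so $\widetilde z_0=[m]\widetilde P=O$, since $9\mid p+1$ gives $3\mid m$. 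Likewise $\widetilde z\in\widetilde E[\sqrt{-3}]=[\sqrt{-3}]\widetilde E[3]$, and this is $\sqrt{-3}$-divisible in $\widetilde E_\varpi(\kappa)$: that group contains the full $(p+1)$-torsion, hence $\widetilde E_\varpi[9]$, because over $\kappa$ it is isomorphic to $\widetilde E_q$ and $E_q(k)=E_q[p+1]$.

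The missing idea is an arithmetic input your proposal never uses: the choice of the auxiliary prime. The argument runs as follows.
\begin{itemize}
\item The cubic functions $F_\epsilon$ give $y'+\epsilon\bar\varpi/2=\epsilon\varpi u^3$ with $u\in K^\times$.
\item Twisting $P'$ to $B\in E_q(K)$ gives $y(B)+q/2\equiv\varpi^{-\epsilon}$ modulo cubes. This class is a non-cube mod $p$ exactly because $(\varpi/p)_3\ne1$.
\item Together with $E_q(k)=E_q[p+1]$ and the cube-pullback formula for $[\sqrt{-3}]$, this yields $[\sqrt{-3}m]\widetilde{P'}\ne O$, hence $[\sqrt{-3}]\widetilde z_0\ne O$.
\item So $z_0$ is a $3$-torsion point that is \emph{not} $\sqrt{-3}$-torsion; your ``or by $\sqrt{-3}$'' in Step 1 would be fatal. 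At the six points of $E_\varpi[3]\setminus E_\varpi[\sqrt{-3}]$, $y+\varpi/2$ has cube class $\omega^{\pm1}$, which is not a cube in $L_{(\varpi)}$.
\end{itemize}
Hence $z_0\notin[\sqrt{-3}]E_\varpi(L_{(\varpi)})$. This is a characteristic-zero statement about $z_0$ over the cubic field, not a residue-field statement about $z$.

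\textbf{Step 3 therefore cannot be completed as planned.} The obstruction does not live in $\widetilde E(\kappa)$. Moreover, divisibility ``modulo torsion'' carries no information through reduction, since the torsion error term reduces to an arbitrary point. The formal group should be used the other way round:
\begin{itemize}
\item $L_{(\varpi,p)}/L_{(\varpi)}$ is totally ramified at $p$, so $z,z^{\sigma_p},z^{\sigma_p^2}$ have the same reduction. Since $1+\omega+\omega^2=0$, this gives $\widetilde z_1=\widetilde z_2=O$.
\item As $e=3<p-1$, the kernel of reduction is torsion-free. So it suffices to show $z_i\ne O$.
\item For that, $w_1=-[\omega]z^{\sigma_p}+[\omega^2]z^{\sigma_p^2}$ satisfies $[\sqrt{-3}]w_1=z_0-z_1$ and $w_1^{\sigma_p}-w_1=[\omega^2]z_1$, and similarly for $w_2$.
\item Thus $z_1=O$ would put $w_1\in E_\varpi(L_{(\varpi)})$ with $[\sqrt{-3}]w_1=z_0$, contradicting the non-divisibility of $z_0$.
\end{itemize}

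\textbf{Two smaller points in Step 1.} The relevant Hecke orbit is $T_p(\tau_r)=\{p\tau_r,(\tau_r+j)/p\}$, not the orbit of $p\tau_r$. And you need the exact identity $P^{\sigma_3\sigma_q}=[\omega^{p+1}]P=P$, not equality ``up to $\sqrt{-3}$-torsion''. A nontrivial factor $[\omega^k]$ would make the order-three trace equal to $O$ rather than $[3]P$.
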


The strategy of the proof is as follows. First we prove the ordinary total trace \(z_0\) is $3$-torsion (Proposition \ref{prop:CM}) but not $\sqrt{-3}$-torsion (Proposition \ref{prop:z0}) using the results in \cite{Yin} and the Kronecker congruence as in \cite{Yin8}. This will imply $z_0$ is not $\sqrt{-3}$ divisible in $L_{(\varpi)}$ (Proposition \ref{prop:z0}). The failure of \([\sqrt{-3}]\)-division forces both $z_1$ and $z_2$ to be nonzero (Proposition \ref{lem:three}), and their zero reduction mod $p$ then forces them to be non-torsion by the theory of formal groups.


\section{The trace $z_0$ is $3$-torsion}\label{3-torsion}


Write \(\varepsilon^c\) for the character of \(f^c\).
The transformation rule in \cite{Yin} is
\begin{equation}\label{eq:diamond-explicit}
\varphi^c(Mz)=[\varepsilon^c(d)]\varphi^c(z),\qquad
\varepsilon^c(d)=\left(\frac d\varpi\right)_3,\qquad
M=\begin{pmatrix}a&b\\c&d\end{pmatrix}\in\Gamma_0(N).
\end{equation}
Equivalently,
\[
\varphi^c(\langle d\rangle z)
   =[\varepsilon^c(d)]\varphi^c(z).
\]

For the point \(\tau_r\),
define the natural quotient points by
\begin{equation}\label{eq:explicit-parameters}
\begin{aligned}
Q_\infty&=\langle p\rangle p\tau_r,\\
Q_j&=(\tau_r+j)/p,\qquad(0\le j<p).
\end{aligned}
\end{equation}
By the moduli description of \(T_p\)
\cite[Sections 1.5 and 5.1--5.2]{DS}, 
The operator \(T_p\) sums the quotients of \(\mathcal E_{\tau_r}\) by the
order-\(p\) subgroups
\[
C_\infty=\langle1/p\rangle,\qquad
C_j=\langle(\tau_r+j)/p\rangle\quad(0\le j<p),
\]
transporting \(H\) and \(\xi\) by each quotient map.
The quotient lattices are
\[
\Z\tau_r+p^{-1}\Z,\qquad
\Z\tau_r+\Z+\Z(\tau_r+j)/p
 =\Z(\tau_r+j)/p+\Z.
\]
Multiplication by \(p\) identifies the first quotient
with \(\mathcal E_{p\tau_r}\), taking \(\xi=1/q\) to \(p/q\), giving the factor \(\langle p\rangle\).
The other quotients already have parameter
\((\tau_r+j)/p\) and generator \(1/q\).
Since \(p\nmid N\), the transported order-\(N\) subgroup
is \(\langle1/N\rangle\) in each case.
Thus these quotients are exactly \(Q_\infty,Q_j\) of
\eqref{eq:explicit-parameters}, and
\[
T_p(\tau_r)=Q_\infty+\sum_{j=0}^{p-1}Q_j.
\]
Applying the CM reciprocity calculation in
\cite[proof of Theorem 4.1]{Yin},  gives
\[
\tau_r\in X_\Gamma(L_{(\varpi)}).
\]
Since the Hecke correspondence \(T_p\) is defined over
\(\mathbb Q\), the set $\{Q_\infty, Q_j\}$ is
Galois stable over \(L_{(\varpi)}\).
Their elliptic images are
\begin{equation}\label{eq:explicit-elliptic-points}
\begin{aligned}
P^{(\infty)}&=\varphi^c(Q_\infty)
       =[\varepsilon^c(p)]P,
       &&\\
P^{(j)}&=\varphi^c(Q_j)
       =\varphi^c\left(\frac{\tau_r+j}{p}\right)
       &&(0\le j<p).
\end{aligned}
\end{equation}

\begin{proposition}
\label{prop:CM}
The points \(Q_\infty,Q_j\) form the Galois orbit of
\(Q_\infty\) over
\(\Gal(H_{3p}L_{(\varpi)}/L_{(\varpi)})\).
Their elliptic images satisfy
\begin{equation}\label{eq:concrete-sum}
\begin{aligned}
P^{(\infty)}+\sum_{j=0}^{p-1}P^{(j)}
 &=[3]\Tr_{H_pL_{(\varpi)}/L_{(\varpi)}}P^{(\infty)}\\
 &=[3\varepsilon^c(p)]z_0=O.
\end{aligned}
\end{equation}
\end{proposition}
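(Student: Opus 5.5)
The plan is to prove the three assertions in order: the orbit statement, the trace identity, and the vanishing.

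For the orbit statement I would use Shimura reciprocity on the moduli interpretation. The point \(\tau_r\) is an elliptic curve with CM by the order \(\OO\) attached to \(\tau_r\), with \(\Gamma\)-level structure (a cyclic \(N\)-subgroup and a generator \(\xi=1/q\) up to cubes). The fact that \(\tau_r\in X_\Gamma(L_{(\varpi)})\) comes from \cite[proof of Theorem 4.1]{Yin}. Since \(p\equiv 2\pmod 3\) is inert in \(K\), no order-\(p\) subgroup \(C_\infty,C_j\) is stable under \(\OO\). Hence every quotient has CM by the order of conductor \(p\) (relative to \(\OO\)), and its field of definition, together with the level structure, lies in \(H_{3p}L_{(\varpi)}\).

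Next, restrict to \(\Gal(H_{3p}L_{(\varpi)}/L_{(\varpi)})\cong\Gal(H_{3p}/K)\), which uses \(H_{3p}\cap L_{(\varpi)}=K\); this needs the same ramification argument at \(\varpi\) as for \(H_p\). The subgroup \(\Gal(H_{3p}L_{(\varpi)}/H_{\OO}L_{(\varpi)})\) acts through ideles \((\OO\otimes\Z_p)^\times/\Z_p^\times\cong\F_{p^2}^\times/\F_p^\times\). This group has order \(p+1\) and acts simply transitively on \(\mathbb P^1(\F_p)\), that is, on the \(p+1\) subgroups of order \(p\). Idelic elements of this shape are trivial at the primes dividing \(N\), so the level structure is transported unchanged. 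Comparing with the degree \([H_{3p}:K]=p+1\) then shows that \(\{Q_\infty,Q_j\}\) is exactly the Galois orbit of \(Q_\infty\), with each point occurring once.

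For the first equality in \eqref{eq:concrete-sum}, \(\varphi^c\) is defined over \(K\), so the elliptic images \(P^{(\infty)},P^{(j)}\) form the Galois orbit of \(P^{(\infty)}=[\varepsilon^c(p)]P\) over \(L_{(\varpi)}\), counted along \(\Gal(H_{3p}L_{(\varpi)}/L_{(\varpi)})\). Since \(P\in E_\varpi(H_pL_{(\varpi)})\) and \([H_{3p}L_{(\varpi)}:H_pL_{(\varpi)}]=3\), the sum over the full group is
\[
[3]\Tr_{H_pL_{(\varpi)}/L_{(\varpi)}}P^{(\infty)}=[3\varepsilon^c(p)]\,\Tr_{H_pL_{(\varpi)}/L_{(\varpi)}}P=[3\varepsilon^c(p)]z_0,
\]
using that \([\varepsilon^c(p)]\) is an endomorphism defined over \(K\) and so commutes with the trace. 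Here I would be careful that the orbit of \(P^{(\infty)}\) is counted with multiplicity three, not as a set, since the stabilizer has order \(3\).

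For the vanishing, I would use that \(\varphi^c\) factors through \(J_\Gamma\) and the \(f^c\)-isotypic quotient, with \(\varphi^c(\infty)=O\). This gives
\[
\sum_{x\in T_p(\tau)}\varphi^c(x)=[a_p(f^c)]\varphi^c(\tau),
\]
where the cusp contribution \(T_p(\infty)=(p+1)\infty\) maps to \(O\). Because \(f^c\) is a CM form from a Hecke character of \(K\) and \(p\) is inert in \(K\), we have \(a_p(f^c)=0\). Therefore the left side of \eqref{eq:concrete-sum} equals \(O\).

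I expect the main obstacle to be the orbit step. Specifically, one must check that the idelic action fixing \(L_{(\varpi)}\) moves the order-\(p\) subgroups transitively while preserving the \(\Gamma\)-level structure, including the ``\(d\) is a cube mod \(q\)'' condition. I would first try to reduce this to the explicit reciprocity computations in \cite[Theorem 4.1]{Yin} and \cite[Proposition 2.1]{Yin8}, with \(\sigma_3,\sigma_q\) handling the primes dividing \(N\).
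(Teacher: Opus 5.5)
Your orbit step is correct and takes a different route from the paper. The paper shows that $\Phi_p(X,j(\tau_r))$ is the minimal polynomial of $j(p\tau_r)$ over $K$, of degree $p+1=[H_{3p}:K]$, and then passes to $L_{(\varpi)}$ using $H_{3p}\cap L_{(\varpi)}=K$. You instead let unit ideles at $p$ act on the $p+1$ subgroups through $\Z_{p^2}^\times/\Z_p^\times(1+p\Z_{p^2})\cong\F_{p^2}^\times/\F_p^\times$. These ideles fix $L_{(\varpi)}$, which is unramified at $p$, and leave the level structure at $N$ alone. This works once you note $h(\OO_3)=1$, so that $H_{\OO}=K$. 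Your Hecke step is the paper's step 3 in Jacobian language. One small slip there: on $X_\Gamma$ one has $\langle p\rangle\infty\ne\infty$, since $p$ is not a cube mod $\varpi$ by \eqref{eq:aux}. This is harmless, because $\varphi^c(\langle p\rangle\infty)=O$ anyway.

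The gap is the first equality in \eqref{eq:concrete-sum}. You write ``since $P\in E_\varpi(H_pL_{(\varpi)})$'', but this descent is exactly what must be proved here. Your own orbit step shows that the point $p\tau_r\in X_\Gamma$ generates $H_{3p}L_{(\varpi)}$ over $L_{(\varpi)}$. So rationality of $\varphi^c$ over $K$ only gives $P\in E_\varpi(H_{3p}L_{(\varpi)})$. Your ``stabilizer of order $3$'' means that $\varphi^c$ takes the same value at three distinct points of the orbit. That is a genuine identity, and it needs the transformation laws of $\varphi^c$.

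The paper gets it from Shimura reciprocity at $3$ and $q$. Take $D=\iota_{\tau_r}(\omega)$ and $D_p=hDh^{-1}$, so that $[p\tau_r,1]^{\sigma_\ell}=[p\tau_r,(D_p)_\ell]$.
\begin{itemize}
\item At $3$: $A^pD_p$ lies in the $3$-component of $\mathfrak U$, $A^p$ lies in the other components, and $\varphi^c\circ A=[\omega]\varphi^c$. Together these give $P^{\sigma_3}=[\omega^p]P$.
\item At $q$: $D_p$ is integral with $q\mid c$ and $d(D_p)=r-1\equiv\omega^2\pmod\varpi$, so \eqref{eq:diamond-explicit} gives $P^{\sigma_q}=[\omega]P$.
\end{itemize}
Hence $P^{\sigma_3\sigma_q}=[\omega^{p+1}]P=P$, using $p\equiv2\pmod 3$, and $\sigma_3\sigma_q$ generates $\Gal(H_{3p}L_{(\varpi)}/H_pL_{(\varpi)})$.

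So $\sigma_3$ and $\sigma_q$ are needed for the multiplicity-three step. They are not needed for the orbit step, as you anticipated, because there your $p$-supported ideles already settle the level-structure issue.
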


\begin{proof}
\emph{1. The Galois orbit.} It is well-known that the Hecke orbit trace of the Heegner points
coincides with the Galois trace, see \cite[Section 1.6]{Gross84} or \cite[Proposition 3.10]{Darmon04}. Our setting is slightly different, so we give an indirect but short proof here.
 
Write \(\OO_c=\Z+c\OO_K\). The identity
\[
-\tau_r^{-1}(\Z\tau_r+\Z)
 =\Z+3(\omega+r)\Z=\OO_3
\]
shows that \(\mathcal E_{\tau_r}\) has CM by \(\OO_3\).
Since \(p\) is inert in \(K\), we have
\(p\nmid3(r^2-r+1)\); hence the quadratic equation
satisfied by \(p\tau_r\),
\[
9(r^2-r+1)X^2+3p(2r-1)X+p^2=0,
\]
is primitive and has discriminant \(-27p^2\).
Thus \(\mathcal E_{p\tau_r}\) has CM by \(\OO_{3p}\).
The class number formula and the main theorem of complex
multiplication \cite[Remark 12.5 and p.~129,
\textit{The main theorem for orders}]{Milne} give
\[
h(\OO_3)=1,\quad j(\tau_r)\in K,\quad
K(j(p\tau_r))=H_{3p},\quad [H_{3p}:K]=p+1.
\]

For the classical modular polynomial \(\Phi_p\),
Milne's product formula \cite[proof of Theorem 6.1,
p.~90]{Milne} gives
\begin{equation}\label{eq:CM-modular-polynomial}
\Phi_p(X,j(\tau_r))
=\bigl(X-j(p\tau_r)\bigr)
 \prod_{a=0}^{p-1}
 \left(X-j\left(\frac{\tau_r+a}{p}\right)\right).
\end{equation}
The left-hand side is monic of degree \(p+1\) over \(K\)
and vanishes at \(j(p\tau_r)\), whose degree over \(K\)
is \(p+1\). It is therefore its minimal polynomial.
Its roots, exactly the \(j\)-values of \(Q_\infty,Q_j\),
are distinct and form one orbit under \(\Gal(H_{3p}/K)\).

Since \(H_{3p}\cap L_{(\varpi)}=K\), the \(j\)-values
remain a single Galois orbit over \(L_{(\varpi)}\).
The quotient set from
step 1 is Galois stable over this field and has distinct
\(j\)-values, so its points also form a single orbit. Hence
\begin{equation}\label{eq:CM-quotient-field}
L_{(\varpi)}(Q_\infty)
 =L_{(\varpi)}(j(Q_\infty))
 =H_{3p}L_{(\varpi)},
\end{equation}
and
\[
Q_\infty+\sum_{j=0}^{p-1}Q_j
 =\sum_{\sigma\in\Gal(H_{3p}L_{(\varpi)}/L_{(\varpi)})}
       Q_\infty^\sigma.
\]

\emph{2. The threefold multiplicity.}
Use the matrices in \cite[(4.0.1)--(4.0.3)]{Yin}:
\begin{equation}\label{eq:CM-matrices}
D=\iota_{\tau_r}(\omega)=
\begin{pmatrix}-r&-1/3\\3(r^2-r+1)&r-1\end{pmatrix},
\qquad
A=\begin{pmatrix}1&1/3\\0&1\end{pmatrix}.
\end{equation}
The formulas in \cite{Yin} give \(AD\in\Gamma_0(N)\) and
\(\varphi^c(Az)=[\omega]\varphi^c(z)\)
\cite[Proposition 3.2]{Yin}.
Since \(d(AD)\equiv\omega^2\pmod\varpi\),
\eqref{eq:diamond-explicit} also gives
\(\varphi^c(Dz)=[\omega]\varphi^c(z)\).
Put \(h=\operatorname{diag}(p,1)\) and \(D_p=hDh^{-1}\),
the corresponding matrix for \(p\tau_r\).
Let
\[\mathfrak{U}=\left\{
\begin{pmatrix}a&b\\c&d\end{pmatrix}
\in \mathrm{GL}_2(\widehat{\mathbb Z}) :
c\in N\widehat{\mathbb Z},\quad
a_q\bmod q\in(\mathbb F_q^\times)^3
\right\}.\footnote{The corresponding definition in \cite{Yin} should be modified to this. This will be clarified in the future version of \cite{Yin}.}
\]
Then the modular curve $X_\Gamma$ can be identified as the adelic curve $X_\mathfrak{U}$, where
\[X_\mathfrak{U}(\mathbb{C})=\text{GL}_2(\mathbb{Q})^+\left\backslash \left(\left(\mathcal{H}\bigsqcup\mathbb{P}^1(\mathbb{Q})\right )\times \text{GL}_2(\mathbb{A}_f)\right/\mathfrak{U}\right),\]
and $\tau$ is identified with $[\tau,1]$.
For these norm-one units, Shimura reciprocity
\cite[Theorems 6.31 and 6.38]{Shimura}, used as in
\cite[proof of Theorem 3.2]{Yin8} and
\cite[proof of Theorem 4.1]{Yin}, reads
\begin{equation}\label{eq:CM-reciprocity}
[p\tau_r,1]^{\sigma_\ell}
 =[p\tau_r,(D_p)_\ell]\qquad(\ell=3,q),
\end{equation}
where \((D_p)_\ell\in \text{GL}_2(\mathbb{A}_f)\) is \(D_p\) at \(\ell\) and \(1\)
elsewhere. We need only the following two actions.

At \(3\), \(AD\in\Gamma_0(N)\) gives
\[
A^pD_p=h(AD)h^{-1}\in U_3,\qquad
A^p\in U_\ell\quad(\ell\ne3).
\]
Left multiplication by \(A^p\) in
\eqref{eq:CM-reciprocity} therefore gives
\[
P^{\sigma_3}=\varphi^c(A^p(p\tau_r))=[\omega^p]P.
\]
At \(q\), the matrix \(D_p\) is integral, its lower-left
entry is divisible by \(q\), and
\(d(D_p)=r-1\equiv\omega^2\pmod\varpi\).
Thus \eqref{eq:diamond-explicit} gives
\[
P^{\sigma_q}=[\varepsilon^c(r-1)^{-1}]P=[\omega]P.
\]

The two computed actions give
\[
P^{\sigma_3\sigma_q}=[\omega^{p+1}]P=P,
\]
since \(p\equiv-1\pmod3\).
Because \(\sigma_3\sigma_q\) generates the order-three group
\(\Gal(H_{3p}L_{(\varpi)}/H_pL_{(\varpi)})\),
we know that \(P\in E_\varpi(H_pL_{(\varpi)})\).
Each conjugate of \(P^{(\infty)}=[\varepsilon^c(p)]P\)
occurs three times in the full orbit sum, giving
\[
P^{(\infty)}+\sum_{j=0}^{p-1}P^{(j)}
 =[3]\Tr_{H_pL_{(\varpi)}/L_{(\varpi)}}P^{(\infty)}
 =[3\varepsilon^c(p)]z_0.
\]


\emph{3. The Hecke relation.}
Since \(p\) is inert in \(K\), the CM theta series
has no ideal of norm \(p\), so \(a_p(f^c)=0\).
The standard weight two Hecke formula
\cite[Section 5.2]{DS} is therefore
\begin{equation}\label{eq:fourier-sum}
p\varepsilon^c(p)f^c(pz)
 +\frac1p\sum_{j=0}^{p-1}f^c\left(\frac{z+j}{p}\right)
 =a_p(f^c)f^c(z)=0.
\end{equation}
Integrating \eqref{eq:fourier-sum} from \(i\infty\)
and using \(\varphi^c(\infty)=O\) gives
\[
[\varepsilon^c(p)]\varphi^c(p\tau_r)
 +\sum_{j=0}^{p-1}\varphi^c\left(\frac{\tau_r+j}{p}\right)=O.
\]
Together with step 2 this proves \eqref{eq:concrete-sum}.
\end{proof}


\section{The non-$\sqrt{-3}$-divisibility of the trace point $z_0$}
\label{sec:frob}

\subsection{The common reduction of the CM conjugates}

\begin{proposition}\label{prop:base}
The CM parameter \(\tau_r\) in \eqref{eq:parameters} satisfies
\begin{equation}\label{eq:base}
P'=\varphi(\tau_r)=(x',y')\notin E_{\bar\varpi}[\sqrt{-3}]\quad
\varphi^c(\tau_r)=(0,\epsilon\varpi/2)
\end{equation}
with $\epsilon=1$ or $-1$. Furthermore,
\begin{equation}
t^2x',\ y'\in K,
\end{equation}

\begin{equation}
y'+\epsilon \bar\varpi/2=\epsilon\varpi u^3,\quad(u\in K^\times).
\label{eq:base-cube}
\end{equation}
\end{proposition}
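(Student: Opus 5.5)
We prove the three assertions separately: first the statement about \(\varphi^c(\tau_r)\), then the field of definition of \(P'\), and finally the cube relation \eqref{eq:base-cube}, which needs a genuinely new input.

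\emph{The point \(\varphi^c(\tau_r)\).} From \cite[Theorem 4.2, Theorem 8.6]{Yin}, \(\varphi^c(\tau_r)\) is a nonzero \(\sqrt{-3}\)-torsion point of \(E_\varpi\), while \(\varphi(\tau_r)\) has infinite order. In particular \(P'\notin E_{\bar\varpi}[\sqrt{-3}]\). The curve \(E_\varpi\) is \(y^2=x^3+\varpi^2/4\) (the twist convention of \eqref{eq:twist}). Its \([\sqrt{-3}]\)-kernel consists of \(O\) and the two points \((0,\pm\varpi/2)\), because these are the fixed points of \([\omega]:(x,y)\mapsto(\omega x,y)\) other than \(O\). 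Hence \(\varphi^c(\tau_r)=(0,\epsilon\varpi/2)\) for some \(\epsilon=\pm1\).

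\emph{Field of definition of \(P'\).} The CM reciprocity of \cite[proof of Theorem 4.1]{Yin}, recalled above, gives \(\tau_r\in X_\Gamma(L_{(\varpi)})\), so \(P'\in E_{\bar\varpi}(L_{(\varpi)})\). Replay step 2 of Proposition \ref{prop:CM} with \(\tau_r\) in place of \(p\tau_r\) (that is, \(h=1\)) and with \(\varphi\) in place of \(\varphi^c\); the character of \(f\) is the conjugate \(\left(\frac{d}{\bar\varpi}\right)_3\). This should give \(P'^{\sigma_3}=[\omega]P'\) and \(P'^{\sigma_q}=[\omega^{a}]P'\) for an explicit exponent \(a\). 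The generator \(\sigma\) of \(\Gal(L_{(\varpi)}/K)\) therefore acts by \(P'^{\sigma}=[\omega^{j}]P'\) for some \(j\). On coordinates this reads \(x'^{\sigma}=\omega^{j}x'\) and \(y'^{\sigma}=y'\). The second relation gives \(y'\in K\) at once. From \(\sigma(t)=\omega t\) we get \((t^2)^\sigma=\omega^2t^2\), so \(t^2x'\) is \(\sigma\)-invariant provided \(j\equiv1\pmod 3\). Checking that \(j\equiv1\) is exactly the bookkeeping with the corrected footnote sign for \(\sigma_q(t)\). If it came out as \(j\equiv2\) instead, the statement would read \(tx'\in K\). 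The hypothesis \(q\equiv4\pmod9\) should pin down the right case.

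\emph{The cube relation.} For \(y'+\epsilon\bar\varpi/2\), use the \(\sqrt{-3}\)-isogeny structure. The line \(y=-\epsilon\bar\varpi/2\) meets \(E_{\bar\varpi}\) only at \((0,-\epsilon\bar\varpi/2)\), with multiplicity three. Consequently the function \(g=y+\epsilon\bar\varpi/2\) has divisor \(3(T)-3(O)\), where \(T=(0,-\epsilon\bar\varpi/2)\). By the standard descent for a \(\sqrt{-3}\)-isogeny, \(g\) induces a homomorphism
\[
\delta:E_{\bar\varpi}(K)\to K^\times/K^{\times3}\quad(\text{suitably defined at } T \text{ and } O),
\]
whose kernel is \([\sqrt{-3}]E_{\bar\varpi}(K)\) up to a fixed twist. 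The claim becomes \(\delta(P')\equiv\epsilon\varpi\equiv\varpi\) modulo cubes (note \(-1=(-1)^3\)).

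The hard step is computing this class. I would express it through the relation between \(\varphi\) and \(\varphi^c\) at the same CM point. Write \(P'=(x',y')\) with \(x'=\lambda\cdot\) (a quantity in \(K\)) and \(\lambda=t^{-2}\). The twist from \(E_{\bar\varpi}\) over \(L_{(\varpi)}\) should then carry the decomposition \(y'+\epsilon\bar\varpi/2=x'^3/(y'-\epsilon\bar\varpi/2)\). This gives \(\delta(P')\equiv t^{-6}\cdot(\text{cube})\cdot(y'-\epsilon\bar\varpi/2)^{-1}\). Since \(t^{-6}=\varpi^{-2}\equiv\varpi\) modulo cubes, it remains to show that \(y'-\epsilon\bar\varpi/2\) is itself a cube times an element making the product correct. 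That reduces to a local check at the primes \(\varpi,\bar\varpi,\sqrt{-3}\): valuations mod 3 together with a congruence mod \(9\) for primary units. I would carry it out using the explicit values of \(x',y'\) from \cite[Section 8]{Yin} and the sign \(\epsilon\) fixed by \(\varphi^c(\tau_r)=-\varphi^c(0)\). This local computation is where I expect the main difficulty.
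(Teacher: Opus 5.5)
\textbf{The gap is in the cube relation \eqref{eq:base-cube}.} Your reduction there is circular. The curve equation gives $(y'+\epsilon\bar\varpi/2)(y'-\epsilon\bar\varpi/2)=x'^3\equiv\varpi^{-2}$ modulo $K^{\times3}$, but this only relates the cube classes of the two factors to each other. The statement ``$y'-\epsilon\bar\varpi/2$ is a cube'' is just a restatement of \eqref{eq:base-cube}.

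A local check at $\varpi,\bar\varpi,\sqrt{-3}$ cannot finish the argument either, for three reasons.
\begin{itemize}
\item Whether an element of $K^\times$ is a cube is not decided at those three primes alone. To confine the class to $K(S,3)$ you would need a Selmer argument for the $K$-rational twist $B=(t^2x',\varpi y')\in E_q(K)$. Your map $\delta$ on $E_{\bar\varpi}(K)$ does not apply, because $P'$ is only an $L_{(\varpi)}$-point.
\item Even with such a Selmer argument, you would need the local behaviour of a Heegner point whose coordinates are unknown for general $q$. What Section 8 of Yin supplies is a modular function, not explicit coordinates.
\item Most importantly, the statement ties the sign in $y'+\epsilon\bar\varpi/2$ to the sign of $\varphi^c(\tau_r)=(0,\epsilon\varpi/2)$, which is a point on a different curve. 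If \eqref{eq:base-cube} holds, then $y'-\epsilon\bar\varpi/2$ is a cube. So the two signs genuinely behave differently, and the content of the statement is which one matches $\varphi^c(\tau_r)$. A computation on $E_{\bar\varpi}$ or $E_q$ alone gives no such link. You mention ``the relation between $\varphi$ and $\varphi^c$ at the same CM point'' but never supply it.
\end{itemize}

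\textbf{The missing input is the function $F_\epsilon$.} This is the modular function $F_\epsilon\in K(X_\Gamma)$ of Yin's Proposition 8.1, with
$F_\epsilon^3=(y\circ\varphi+\epsilon\bar\varpi/2)/(y\circ\varphi^c+\epsilon\varpi/2)$.
At $\tau_r$ the denominator equals $\epsilon\varpi$. This gives $y'+\epsilon\bar\varpi/2=\epsilon\varpi u^3$ with $u=F_\epsilon(\tau_r)\in L_{(\varpi)}$.

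You must still show $u\in K$. Otherwise the relation only holds modulo $K^{\times3}\langle\varpi\rangle$, which says nothing. The paper argues as follows.
\begin{itemize}
\item $F_\epsilon(Az)/F_\epsilon(z)$ is a constant cube root of unity. This is because $A$ acts by $[\omega]$ on both parametrizations and $y$ is $[\omega]$-invariant.
\item That constant is $1$, by the nonzero constant term of $F_\epsilon$ at $\infty$.
\item Then $[\tau_r,1]^{\sigma_q}=[A^{-1}\tau_r,1]$ gives $u^{\sigma_q}=u$.
\end{itemize}

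\textbf{Your first two steps agree with the paper, with two small corrections.}
\begin{itemize}
\item The nonvanishing $\varphi^c(\tau_r)\ne O$ comes from $\varphi^c(\tau_r)=-\varphi^c(0)$ (Remark 8.5 and Proposition 5.9 of Yin), not from Theorems 4.2 and 8.6.
\item Your hedge on $j$ is unnecessary. From $[\tau_r,1]^{\sigma_3}=[A\tau_r,1]$ and $\varphi(Az)=[\omega]\varphi(z)$ you get $P'^{\sigma_3}=[\omega]P'$. Since $\sigma_3(t)=\omega t$ and $\sigma_3$ generates $\Gal(L_{(\varpi)}/K)$, this forces $j=1$ directly.
\end{itemize}
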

\begin{proof}

\cite[Theorem 4.2, Theorem 8.6]{Yin}
says,  \(\varphi^c(\tau_r)\) is $\sqrt{-3}$-torsion while $\varphi(\tau_r)$ is of infinite order\footnote{The condition of $r$ in Theorems 4.1, 4.2 and 8.6 of the version 3 of \cite{Yin} is reversed due to the wrong condition of the adelic congruence group $\mathfrak{U}$ (we should define $\mathfrak{U}$ using matrix component $a$ instead of $d$). This will be modified in the future version.}. \cite[Remark 8.5 and Proposition 5.9]{Yin} says
\[
\varphi^c(\tau_r)=-\varphi^c(0)\ne O.
\]
This immediately proves \eqref{eq:base}.

\cite[Proposition 2.1; Theorem 4.1]{Yin}
and transformation laws \cite[Proposition 3.2]{Yin}
give
\[
P'\in E_{\bar\varpi}(K(\sqrt[3]{\varpi})),\qquad
P'^{\sigma_q}=[\omega^2]P',\qquad t^{\sigma_q}=\omega^2 t.
\]
This gives
\(t^2x',y'\in K\), as required.


Finally, the cubic functions $F_\pm$ in
\cite[(8.0.1)--(8.0.2), Proposition 8.1]{Yin} satisfy
\begin{equation}\label{eq:F}
F_\epsilon^3=
\frac{y\circ\varphi+\epsilon\bar\varpi/2}
     {y\circ\varphi^c+\epsilon\varpi/2},
\qquad F_\epsilon\in K(X_\Gamma).
\end{equation}
Since \(\varphi(Az)=[\omega]\varphi(z)\) and the same holds
for \(\varphi^c\), the quotient \(F_\epsilon(Az)/F_\epsilon(z)\)
is a constant cube root of unity. Its value is \(1\) at
\(\infty\), by the nonzero constant term in
\cite[Proposition 7.4]{Yin}. Hence \(F_\epsilon(Az)=F_\epsilon(z)\).
The same CM reciprocity calculation
\cite[proof of Theorem 4.1]{Yin} gives
\[
[\tau_r,1]^{\sigma_3}=[A\tau_r,1].
\]
Since \(\sigma_3\sigma_q\) fixes \(L_{(\varpi)}\), we obtain
\[
[\tau_r,1]^{\sigma_q}=[A^{-1}\tau_r,1].
\]
Thus \(u=F_\epsilon(\tau_r)\) belongs to \(L_{(\varpi)}\) and
\[
u^{\sigma_q}
=F_\epsilon(A^{-1}\tau_r)
=F_\epsilon(\tau_r)=u.
\]
As \(\sigma_q\) generates
\(\operatorname{Gal}(L_{(\varpi)}/K)\), we have \(u\in K\).
Moreover, \(u\ne0\) by \eqref{eq:base}.
Evaluating \eqref{eq:F} now gives
\[
y'+\epsilon\bar\varpi/2=\epsilon\varpi u^3.
\]


\end{proof}

Fix compatible primes above \(p\) throughout, and write
\(\mathfrak P\) for the chosen prime in a field containing
the CM values. Congruences modulo \(\mathfrak P\) mean
equality in its residue field. Only this residue-field
statement is required; we do not assert divisibility
by the rational integer \(p\) in a ramified integer ring.
In this section we will use\ $\widetilde{\cdot}$\ to mean the reduction modulo
$\mathfrak P$.
\begin{proposition}
\label{prop:frob}Coordinatewise \(p\)-th power $Fr_p$ maps
\(\widetilde E_{\bar\varpi}\) to \(\widetilde E_\varpi\).
For the fixed prime \(\mathfrak P\), there is 
$i=0,1,2$ independent of
\(\sigma\in\Gal(H_pL_{(\varpi)}/L_{(\varpi)})\), such that
\begin{equation}\label{eq:common}
\widetilde{P^\sigma}
=[\omega^i]\widetilde{P'}^{Fr_p}.
\end{equation}
When the affine coordinates are integral, this reads
\begin{equation}\label{eq:coordinate-congruence}
x^c(p\tau_r)\equiv\omega^i x(\tau_r)^p,\qquad
y^c(p\tau_r)\equiv y(\tau_r)^p\pmod{\mathfrak P}.
\end{equation}
Here \(x(z),y(z)\) are the coordinate functions of \(\varphi(z)\),
and \(x^c(z),y^c(z)\) are those of \(\varphi^c(z)\).
In all cases \eqref{eq:common} is an equality of projective
points. 
\end{proposition}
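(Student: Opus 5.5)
The plan is to prove the statement in three steps: first the Frobenius compatibility of the two curves, then a Kronecker-type congruence on the modular curve $X_\Gamma$ relating $\varphi^c(pz)$ to $\varphi(z)^{Fr_p}$, and finally a check that the congruence is uniform over the Galois conjugates $P^\sigma$.

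\emph{Step 1: the Frobenius map.} Since $p$ is inert in $K$, complex conjugation on $\OO_K$ reduces to the $p$-th power map on $k=\F_{p^2}$. Hence $\widetilde{\bar\varpi}^{\,p}=\widetilde{\varpi}$ and $(\widetilde{\bar\varpi}^2/4)^p=\widetilde{\varpi}^2/4$. So the coordinatewise $p$-th power sends $y^2=x^3+\bar\varpi^2/4$ to $y^2=x^3+\varpi^2/4$. This is a purely inseparable isogeny $\widetilde E_{\bar\varpi}\to\widetilde E_\varpi$.

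\emph{Step 2: the Kronecker congruence.} I would establish it on $X_\Gamma$ using $q$-expansions, as in \cite{Yin8}.
\begin{itemize}
\item The parametrization $\varphi^c$ comes from the Hecke character of $E_{\bar\varpi}$. Its coefficients are the complex conjugates of those of $f$, and $a_p=0$.
\item Write $x(z),y(z)$ and $x^c(z),y^c(z)$ as Laurent series in $e^{2\pi iz/N}$ with coefficients in $\OO_K[1/6q]$. These are integral by the unit Manin--Stevens constants.
\item Modulo $p$, the series $x^c(pz)$ should equal $\sigma_{Fr}(x)(z)^p$, where $\sigma_{Fr}$ acts on coefficients by Frobenius, which is conjugation. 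Up to a cube root of unity from the formal-group normalisation, $\sigma_{Fr}(x)=x^c$.
\item This cube root of unity is where $\omega^i$ enters, since $E_{\bar\varpi}$ has automorphisms $[\omega]$ acting by $x\mapsto\omega x$, $y\mapsto y$. That also explains why $y$ has no $\omega^i$ in \eqref{eq:coordinate-congruence}.
\end{itemize}
Equivalently, I would invoke the Eichler--Shimura relation $T_p\equiv Fr+ \langle p\rangle V$ in characteristic $p$. Since $a_p=0$ and $T_p$ is supported on $Q_\infty$ and the $Q_j$, the point $Q_\infty=\langle p\rangle p\tau_r$ reduces to the Frobenius image of $\tau_r$.

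\emph{Step 3: independence of $\sigma$.} I would use the Kronecker congruence $\Phi_p(X,Y)\equiv (X^p-Y)(X-Y^p)\pmod p$. It implies that all $p+1$ roots of \eqref{eq:CM-modular-polynomial}, namely the $j$-values of $Q_\infty$ and the $Q_j$, become congruent modulo $\mathfrak P$ to $j(\tau_r)^{p}$ or $j(\tau_r)^{1/p}$. Because $p$ is inert, $j(\tau_r)\in K$ reduces into $k$, and in fact all $p+1$ roots reduce to a single value. At the level of $X_\Gamma$, all CM points $Q_\infty^\sigma$ then reduce to the same supersingular point, namely the Frobenius of $\widetilde{\tau_r}$, up to the level structure.

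The level structure at $N=9q$ is étale at $p$, so it is determined by the reduction. The only remaining ambiguity is the automorphism group of the supersingular CM curve, whose action changes $\varphi^c$ by $[\omega^i]$. I would argue that this ambiguity is constant in $\sigma$, for two reasons: $\Gal(H_pL_{(\varpi)}/L_{(\varpi)})$ acts through the decomposition group at $\mathfrak P$, and the residue field $\kappa$ is fixed. Then \eqref{eq:common} follows, with $\widetilde{P'}$ given by Proposition \ref{prop:base}. If the affine coordinates of $P'$ are not integral, I would read both sides as projective points, so that both are $O$ simultaneously.

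\textbf{Main obstacle.} The hardest part will be showing that $i$ does not depend on $\sigma$. This means controlling the extra automorphisms $[\omega]$ of the supersingular reduction together with the $\Gamma$-level structure. My first attempt would be to check that the reductions of the level-$N$ data $\langle1/N\rangle$ transported along each $C_j$ agree. Then the diamond-operator twist \eqref{eq:diamond-explicit} contributes the same $\varepsilon^c(d)$ for every $\sigma$, since $\sigma$ acts on $P$ only through $[\omega^{\cdot}]$ via Shimura reciprocity as in Proposition \ref{prop:CM}.
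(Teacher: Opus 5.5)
The single-conjugate congruence is a standard Kronecker/Eichler--Shimura argument, and your outline of it is acceptable. The gap is in the step you single out as the main obstacle, the independence of $i$ from $\sigma$: your argument there is incorrect, although the right argument is one line and you nearly state it. Since $h_K=1$ and $p$ is inert, $\Gal(H_p/K)$ is a quotient of $\OO_{K_p}^\times$, the local units at the unique prime above $p$. Hence $H_p/K$ is totally ramified at $p$. As $L_{(\varpi)}/K$ is unramified there, $H_pL_{(\varpi)}/L_{(\varpi)}$ is totally ramified at $\mathfrak P$. Every $\sigma$ then lies in the inertia group and acts trivially on $\kappa$, so $\widetilde{P^\sigma}=\widetilde P$ on the nose. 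Thus \eqref{eq:common} for $P$ alone gives it for all $\sigma$ with the same $i$; this is how the paper concludes.

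Your remark about the decomposition group and the fixed residue field $\kappa$ is the germ of this. Without total ramification it is not justified, and you never draw the conclusion $\widetilde{P^\sigma}=\widetilde P$. The mechanism you actually propose is wrong: elements of $\Gal(H_pL_{(\varpi)}/L_{(\varpi)})$ have no reason to act on $P$ through $[\omega^{\cdot}]$. The Shimura reciprocity computation in Proposition~\ref{prop:CM} gives such a formula only for $\sigma_3$ and $\sigma_q$, whose restrictions fix $H_p$ and move $t$. The conjugates $P^\sigma$ are images under $\varphi^c$ of genuinely different CM points. Once you use total ramification, the $j$-level congruence for $\Phi_p$ and the check of level data along each $C_j$ are unnecessary.

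Your account of where $\omega^i$ comes from is also off. Automorphisms of the reduced moduli object cannot move a point of $X_\Gamma$, since points are isomorphism classes. In any case they are only $\pm1$ here, because $j(\tau_r)=-2^{15}\cdot3\cdot5^3\not\equiv0,1728\pmod p$. On $X_\Gamma$, a discrepancy $[\omega^{\cdot}]$ can only come from the diamond operator relating $p\tau_r$ to $Q_\infty=\langle p\rangle p\tau_r$. Tracking it means matching the $\Gamma$-level structure with the $\Q$-model in which $\varphi^c$ is the conjugate of $\varphi$.

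Two further points on your Step 2. First, $a_p=0$ is not what makes $Q_\infty$ reduce to a Frobenius transform of $\widetilde{\tau_r}$; supersingular reduction is, since every order-$p$ subgroup then reduces into $\ker F$. Second, the $q$-expansion version only yields values at $\tau_r$ after you invoke the $q$-expansion principle on a model of level $\Gamma\cap\Gamma_0(p)$. That model's special fiber is singular exactly at the supersingular points, which is where $\tau_r$ lands.

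The paper avoids all of this bookkeeping. Since $[\omega]$ fixes $y$, the functions $y\circ\varphi$ and $y\circ\varphi^c$ descend to $X_0(N)$. Only the $X_0(N)$ congruence $\widetilde S_{p,r}=\widetilde S_r^{Fr_p}$ is then needed to get $y(\widetilde P)=y(\widetilde{P'})^p$. The factor $\omega^i$ appears for free, because two points on $y^2=x^3+c$ with the same $y$-coordinate have $x$-coordinates with equal cubes.
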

\begin{proof}
By \cite[Section 5]{Yin}, \(y,y^c\) are
conjugate modular functions of level \(\Gamma_0(N/3)\).
Regard them as maps
\(\mathcal Y,\mathcal Y^c:X_0(N)_K\to\mathbf P^1_K\).
They extend at \(p\): apply the extension argument in
\cite[proof of Theorem 7.2]{Yin} to the parametrizations,
compose with \(y:E_a\to\mathbf P^1\), and descend through
the degree-three quotient \(X_\Gamma\to X_0(N)\);
here \(p\nmid N\) and \(p\ne3\).

Write \(S_r,S_{p,r}\) for the points of \(X_0(N)\)
represented by \(\tau_r,p\tau_r\).
Since \(p\) is inert in \(K\), the geometric congruence
in \cite[proof of Proposition 2.4]{SSY}, gives
\[
\widetilde S_{p,r}=\widetilde S_r^{Fr_p}.
\]
For coefficients \(a\in\OO_{K,(p)}\), we have
\(a^p\equiv\bar a\pmod p\), since \(\omega^p=\omega^2\).
Conjugating coefficients therefore gives
\[
y(\widetilde P)
 =\widetilde{\mathcal Y^c}(\widetilde S_r^{Fr_p})
 =\widetilde{\mathcal Y}(\widetilde S_r)^p
 =y(\widetilde{P'})^p.
\]
This equality holds in \(\mathbf P^1(\overline{\F}_p)\);
at a pole use the local coordinate \(1/\mathcal Y\).

The points \(\widetilde P\) and
\(\widetilde{P'}^{Fr_p}\) lie on \(\widetilde E_\varpi\)
and have the same \(y\)-coordinate.
If this value is finite, the equation
\(y^2=x^3+\widetilde\varpi^{\,2}/4\) shows that their
\(x\)-coordinates have the same cube.
Thus they differ by \([\omega^i]\).
Finally, \(H_pL_{(\varpi)}/L_{(\varpi)}\) is totally
ramified at \(p\), so its Galois group acts trivially
on the residue field. Hence
\(\widetilde{P^\sigma}=\widetilde P\) for every \(\sigma\),
and the same \(\omega^i\) proves \eqref{eq:common}.
\end{proof}

\subsection{Reduction of the auxiliary point}

Twist the base point $P'$ to
\begin{equation}\label{eq:B}
B=(t^2x',\varpi y')\in E_q(K).
\end{equation}
This follows immediately from Proposition \ref{prop:base}. Moreover,
\[
y(B)+\epsilon q/2=\epsilon\varpi^2u^3.
\]
Since
\((y(B)+q/2)(y(B)-q/2)=x(B)^3\), we obtain
\begin{equation}\label{eq:Bclass}
y(B)+q/2\equiv\varpi^{-\epsilon}
\quad\text{in }K^\times/K^{\times3}.
\end{equation}

\begin{lemma}\label{lem:finite}
For \(m=(p+1)/3\), we have
\begin{equation}\label{eq:Bnonzero}
[\sqrt{-3}m]\widetilde B\ne O.
\end{equation}
\end{lemma}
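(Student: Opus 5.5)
The plan is to work entirely in the residue field $k=\F_{p^2}$ of $K$ at the inert prime $p$, and to turn \eqref{eq:Bnonzero} into a non-divisibility statement that the $[\sqrt{-3}]$-descent map can detect. Since $p\equiv 2\pmod 3$, the reduction $\widetilde E_q:y^2=x^3+\widetilde q^{\,2}/4$ is supersingular. I will first show that its $p^2$-power Frobenius equals $[-p]$; this is the standard fact for $j=0$ curves with $p\equiv 2\pmod 3$, and one can check it via the twist, since $q\in\F_p$. It follows that $\widetilde E_q(k)=\widetilde E_q[p+1]$. Because $\OO_K$ acts on $\widetilde E_q$ over $k$, this group is free of rank one over $\OO_K/(p+1)=\OO_K/(3m)$. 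As $(3m)=(\sqrt{-3})^2(m)$, a point $X$ of this cyclic $\OO_K$-module satisfies $[\sqrt{-3}m]X=O$ if and only if $X\in[\sqrt{-3}]\widetilde E_q(k)$. So \eqref{eq:Bnonzero} is equivalent to
\[
\widetilde B\notin[\sqrt{-3}]\,\widetilde E_q(k).
\]

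The next step is to use the Kummer (descent) map for the endomorphism $[\sqrt{-3}]$ on $y^2=x^3+D$ with $D=q^2/4$:
\[
\delta:\ \widetilde E_q(k)/[\sqrt{-3}]\widetilde E_q(k)\longrightarrow k^\times/k^{\times3},\qquad (x,y)\longmapsto y+\sqrt D=y+q/2 .
\]
Its kernel is exactly $[\sqrt{-3}]\widetilde E_q(k)$. At the points where $y+q/2=0$ one uses the value $(y-q/2)^{-1}$ instead; this is justified by $(y+q/2)(y-q/2)=x^3$. I will prove injectivity by comparing orders: both sides have order $3$, because $\mu_3\subset k$. Alternatively it follows from the usual Weil-pairing description of the Kummer map, since $\ker[\sqrt{-3}]=\{O,(0,\pm q/2)\}$ is defined over $k$.

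It then remains to show that $y(B)+q/2$ reduces to a non-cube in $k^\times$. By \eqref{eq:Bclass}, $y(B)+q/2=\varpi^{-\epsilon}v^3$ for some $v\in K^\times$. Write $v=p^{a}v_0$ with $v_0$ a $p$-unit; this is possible because $p$ is inert, so $v_p$ is the only relevant valuation. Then the $p$-unit part of $y(B)+q/2$ reduces to $\widetilde\varpi^{-\epsilon}\widetilde v_0^{\,3}$. Since $T^3-\varpi$ is irreducible over $\F_{p^2}$ by \eqref{eq:aux}, $\widetilde\varpi$ is not a cube in $k$. Hence the class is nontrivial, so $\delta(\widetilde B)\neq 1$ and \eqref{eq:Bnonzero} follows.

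The main obstacle is making the descent map compatible with reduction in the degenerate cases. There are three: (i) $B$ has non-integral coordinates at $p$, so $\widetilde B=O$; (ii) $y(B)+q/2\equiv 0$; (iii) $y(B)-q/2\equiv0$.
\begin{itemize}
\item For (i), I would argue with valuations. If $v_p(x(B))=-2n<0$, then $v_p(y(B)\pm q/2)=-3n$. After multiplying by $p^{3n}$, the expression $(y(B)+q/2)p^{3n}\equiv (y/p^{-3n})$ is a unit whose reduction is a cube: its square is $(x p^{2n})^3$ mod $p$ up to sign, and $-1$ is a cube. This contradicts the non-cube class $\widetilde\varpi^{-\epsilon}$, so case (i) cannot occur.
\item For (ii) and (iii), I would switch to the other factor $y\mp q/2$, which has class $\varpi^{\epsilon}$ and is also a non-cube. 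The point $\widetilde B=(0,\pm q/2)$ is then $\sqrt{-3}$-torsion, but it is not in $[\sqrt{-3}]\widetilde E_q(k)$, by the same evaluation of $\delta$ via $(y-q/2)^{-1}$.
\end{itemize}
I expect checking this case analysis, together with verifying the exact normalization of $\delta$, to be the only delicate part.
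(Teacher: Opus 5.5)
Your proposal is correct, and it uses the same two inputs as the paper: the image-equals-kernel identity \eqref{eq:imagekernel} and the non-cube class \eqref{eq:Bclass}. You derive \eqref{eq:imagekernel} from freeness of $\widetilde E_q(k)=\widetilde E_q[p+1]$ over $\OO_K/(3m)$; the paper derives it from surjectivity of $[\sqrt{-3}]$ on $\bar k$-points. These are equivalent. The real difference is where you evaluate the cubic obstruction. The paper never reduces $y(B)+q/2$. Assuming $[\sqrt{-3}]\widetilde Q=\widetilde B$, it lifts $\widetilde Q$ by Hensel's lemma to $Q\in E_q(K_p)$ with $[\sqrt{-3}]Q=B$, using that the isogeny is \'etale of degree prime to $p$. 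It then applies \eqref{eq:cube-pullback} to $B$ itself in $K_p$, where $y(B)+q/2$ has the non-cube unit class $\varpi^{-\epsilon}$. The only nondegeneracy needed there is $X\neq 0$, which is automatic from $B\neq O$. So integrality of $B$ at $p$ and the special points $(0,\pm q/2)$ never arise. Your residue-field Kummer map avoids the lifting step, but you pay for it with the valuation case analysis.

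Two things in your write-up need fixing. First, you do not need injectivity of $\delta$. You only need that $\delta$ is trivial on $[\sqrt{-3}]\widetilde E_q(k)$, which is exactly \eqref{eq:cube-pullback} over $k$. In any case, equality of orders alone would not give injectivity unless you also showed $\delta\neq 1$.

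Second, and more importantly, your bullet on (ii) and (iii) draws the wrong conclusion. By your own first paragraph, the $\sqrt{-3}$-torsion of $\widetilde E_q(k)\cong\OO_K/(3m)$ is $[\sqrt{-3}m]\widetilde E_q(k)\subset[\sqrt{-3}]\widetilde E_q(k)$. This is consistent with $\delta(0,q/2)=q$ and $\delta(0,-q/2)=(-q)^{-1}$ both lying in $\F_p^\times=\F_p^{\times 3}$. So if $\widetilde B$ were $(0,\pm q/2)$, you would have $[\sqrt{-3}m]\widetilde B=O$, and the lemma would be false. Your claim that such a point is $\sqrt{-3}$-torsion yet not in $[\sqrt{-3}]\widetilde E_q(k)$ is therefore impossible.

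What your computation actually proves is that these cases never occur. The other factor $y(B)\mp q/2$ would be a $p$-unit with residue $\mp q$, which is a cube in $k$. But its class $\varpi^{\pm\epsilon}$ in $K^\times/K^{\times 3}$ forces its residue to be a non-cube. State (ii) and (iii) as contradictions, exactly as you did for case (i), and the proof is complete.
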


\begin{proof}
Choose \(\rho\in\F_p^\times\) with \(\rho^3=q\);
this is possible because \(p\equiv2\pmod3\).
The change of variables
\((x,y)\mapsto(x/\rho^2,y/q-1/2)\) identifies \(E_q\)
over \(\F_p\) with \(E_{\min}^1:y^2+y=x^3\).
Thus \cite[(6.6)]{Yin8} gives
\[
E_q(k)\simeq\bigl(\Z/(p+1)\Z\bigr)^2.
\]
Since \(p\nmid p+1\), this is the full
\((p+1)\)-torsion subgroup, we have
\begin{equation}
E_q(k)=E_q[p+1](\bar k).\label{eq:finitegroup}
\end{equation}

Next, \([3m]=-[\sqrt{-3}m]\circ[\sqrt{-3}]\), and \([\sqrt{-3}]\) is
surjective on \(E_q(\bar k)\). Consequently
\[
[\sqrt{-3}]E_q[3m](\bar k)=E_q[\sqrt{-3}m](\bar k).
\]
Together with \eqref{eq:finitegroup}, this proves
\begin{equation}
[\sqrt{-3}]E_q(k)=\ker\bigl([\sqrt{-3}m]:E_q(k)\to E_q(k)\bigr).
\label{eq:imagekernel}
\end{equation}

For \eqref{eq:Bnonzero}, work over the completion \(K_p\),
whose residue field is \(k\).
By \eqref{eq:Bclass}, the cube class of \(y(B)+q/2\)
is represented by the unit \(\varpi^{-\epsilon}\).
Its residue is a noncube by \eqref{eq:aux}.
Hence \(y(B)+q/2\) is not a cube in \(K_p\).

Suppose now that \([\sqrt{-3}m]\widetilde B=O\).
By \eqref{eq:imagekernel}, there is a
\(\widetilde Q\in E_q(k)\) with
\([\sqrt{-3}]\widetilde Q=\widetilde B\).
Since \(E_q\) has good reduction at $p$ and \([\sqrt{-3}]\) has degree
\(3\), prime to \(p\), Hensel's lemma lifts this equality
to \([\sqrt{-3}]Q=B\) for some \(Q\in E_q(K_p)\).
Write \(Q=(X,Y)\); here \(X\ne0\), since \(B\ne O\).
The addition law on \(E_q\) gives
\begin{equation}\label{eq:cube-pullback}
y([\sqrt{-3}](X,Y))+\frac q2
 =\left(\frac{Y/\sqrt{-3}-q/2}{X}\right)^3
 \qquad(X\ne0).
\end{equation}
This makes \(y(B)+q/2\) a cube in \(K_p\),
a contradiction. This proves \eqref{eq:Bnonzero}.
\end{proof}

The twist \((x,y)\mapsto(t^2x,\varpi y)\) extends over the
local integers at \(p\), since \(t\) is a unit, and commutes
with \([\sqrt{-3}]\).
Transporting \eqref{eq:Bnonzero} to \(\kappa=\F_{p^6}\)
therefore gives
\begin{equation}\label{eq:P0nonzero}
[\sqrt{-3}m]\widetilde{P'}\ne O.
\end{equation}
The group calculation was made over \(k\), before this
transport to \(\kappa\).

\subsection{$z_0$ is not divisible by \texorpdfstring{\(\sqrt{-3}\)}{sqrt(-3)}}

\begin{proposition}\label{prop:z0}
The ordinary trace \(z_0\) satisfies
\begin{equation}\label{eq:z0-three}
[3]z_0=O,\qquad [\sqrt{-3}]z_0\ne O,\qquad
z_0\notin[\sqrt{-3}]E_\varpi(L_{(\varpi)}).
\end{equation}
\end{proposition}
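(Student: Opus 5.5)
The first claim, \([3]z_0=O\), is Proposition \ref{prop:CM}: \([3\varepsilon^c(p)]z_0=O\), and \([\varepsilon^c(p)]\) is an automorphism. For the second claim I will reduce modulo \(\mathfrak P\). By Proposition \ref{prop:frob}, every conjugate \(P^\sigma\), \(\sigma\in\Gal(H_pL_{(\varpi)}/L_{(\varpi)})\), has the same reduction \([\omega^i]\widetilde{P'}^{Fr_p}\). The trace \(z_0\) has \(m=(p+1)/3\) terms, so
\[
\widetilde{z_0}=[m]\,[\omega^i]\,\widetilde{P'}^{Fr_p}.
\]
Reduction is a homomorphism, and \([\sqrt{-3}]\) commutes with both \([\omega^i]\) and the Frobenius map. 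Here I use that \(Fr_p\) intertwines \([\sqrt{-3}]\) on \(\widetilde E_{\bar\varpi}\) with \([\overline{\sqrt{-3}}]=-[\sqrt{-3}]\) on \(\widetilde E_\varpi\), since \(\omega^p=\omega^2\); this only changes a sign. We get
\[
\widetilde{[\sqrt{-3}]z_0}=\pm[\omega^i]\bigl([\sqrt{-3}m]\widetilde{P'}\bigr)^{Fr_p}.
\]
By \eqref{eq:P0nonzero} this is nonzero, because \(Fr_p\) is a bijection on points. Hence \([\sqrt{-3}]z_0\ne O\).

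For the third claim, suppose \(z_0=[\sqrt{-3}]w\) with \(w\in E_\varpi(L_{(\varpi)})\). Then \([\sqrt{-3}]^3w=[\sqrt{-3}][3]\cdot(\text{unit})\,w\) equals \([3]z_0\) up to a unit, so \(w\) is \(3\sqrt{-3}\)-torsion. I then use the torsion structure of \(E_\varpi\) over \(L_{(\varpi)}=K(\sqrt[3]{\varpi})\), which I will try to pin down with two arguments.

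First, the torsion of \(E_\varpi(L_{(\varpi)})\) injects into \(\widetilde E_\varpi(\kappa)\) away from \(p\). Second, the \(3\)-primary torsion is controlled by the Galois action. \(E_\varpi[\sqrt{-3}]=\{O,(0,\pm\varpi/2)\}\) is rational over \(K\). The points of \(E_\varpi[3]\) not in \(E_\varpi[\sqrt{-3}]\) have \(x^3=-\varpi^2\), so \(x=-\sqrt[3]{\varpi^2}\cdot\zeta\), and \(y^2=-3\varpi^2/4\) gives \(y\in K\). So \(E_\varpi[3]\subset E_\varpi(L_{(\varpi)})\). I expect the \(9\)-division and the \(3\sqrt{-3}\)-division points not to be defined over \(L_{(\varpi)}\), because they would require further cube roots or \(\sqrt[9]{\ }\)-type extensions ramified at \(3\), incompatible with \(L_{(\varpi)}\); a local computation at \(3\) or at \(\varpi\) should confirm this. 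Granting it, \(w\in E_\varpi[3]\), and therefore \(z_0=[\sqrt{-3}]w\in E_\varpi[\sqrt{-3}]\). That gives \([\sqrt{-3}]z_0=O\), contradicting the second claim.

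The main obstacle is this torsion bound, namely showing \(E_\varpi(L_{(\varpi)})[3^\infty]=E_\varpi[3]\). An alternative route avoids the global torsion computation. Reduce modulo \(\mathfrak P\) and use that \(z_0\) reduces to \([m\omega^i]\widetilde{P'}^{Fr_p}\). The analogue of \eqref{eq:imagekernel} over \(\kappa\) would then show that \(\widetilde{z_0}\) is not a \([\sqrt{-3}]\)-multiple of a point of \(E_\varpi(\kappa)\) lying over the residue field of \(L_{(\varpi)}\). I would try the global torsion argument first and fall back on this reduction argument.
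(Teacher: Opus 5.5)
Your arguments for $[3]z_0=O$ and $[\sqrt{-3}]z_0\ne O$ are the paper's. The gap is in the third claim. Your reduction of it to the bound $E_\varpi(L_{(\varpi)})[3^\infty]=E_\varpi[3]$ is logically sound: if $z_0=[\sqrt{-3}]w$, then $[\sqrt{-3}]^3w=-[3]z_0=O$, and $w\in E_\varpi[3]$ would give $[\sqrt{-3}]z_0=[-3]w=O$. But you only ``expect'' that bound and then grant it, and it carries the whole content of the claim.

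Moreover, the tools you name cannot supply it. Over $\kappa$ the curve $\widetilde E_\varpi$ becomes isomorphic to $y^2+y=x^3$, whose $p^2$-Frobenius is $-p$. So $\widetilde E_\varpi(\kappa)=\widetilde E_\varpi[p^3+1]$ with $27\mid p^3+1$, and injecting torsion into it bounds nothing. Your fallback fails for the same reason. As an $\OO_K$-module $\widetilde E_\varpi(\kappa)\cong\OO_K/(p^3+1)$, and since $v_3(p^3+1)\ge 3$, every $3$-torsion point of it, in particular $\widetilde z_0$, lies in $[\sqrt{-3}]\widetilde E_\varpi(\kappa)$. No argument modulo $\mathfrak P$ can detect the non-divisibility.

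The missing idea is the paper's $\sqrt{-3}$-descent map. By \eqref{eq:cube-pullback} with $q$ replaced by $\varpi$, $y(Q)+\varpi/2$ is a cube in $L_{(\varpi)}$ for every $Q\ne O$ in $[\sqrt{-3}]E_\varpi(L_{(\varpi)})$. Your first two claims place $z_0$ among the six points $(-\omega^it^2,\pm\varpi\sqrt{-3}/2)$. There $y+\varpi/2$ equals $-\varpi\omega^2$ or $-\varpi\omega$, hence is $\omega^{\pm1}$ modulo cubes in $L_{(\varpi)}$. This is not a cube, since otherwise $L_{(\varpi)}=K(\zeta_9)$, which is unramified at $\varpi$.

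Alternatively, you can complete your own route by the local computation at $\varpi$ that you allude to.
\begin{itemize}
\item Over $L_{(\varpi)}$ one has $E_\varpi\cong E_1\colon Y^2=X^3+1/4$ via $(x,y)\mapsto(x/t^2,y/\varpi)$.
\item $E_1$ has good reduction at the prime above $\varpi$, with residue field $\F_q$.
\item Since $E_1[3]$ is $K$-rational, the Frobenius there is $\equiv 1\pmod 3$, so it is $\varpi$ or $\bar\varpi$ and $\#\widetilde E_1(\F_q)=N(\varpi-1)$.
\item From $q\equiv4\pmod9$ one gets $v_{\sqrt{-3}}(\varpi-1)=2$, so the $3$-part of $\#\widetilde E_1(\F_q)$ is exactly $9$.
\item The $3$-primary torsion of $E_\varpi(L_{(\varpi)})$ injects into $\widetilde E_1(\F_q)$, hence equals $E_\varpi[3]$.
\end{itemize}
As written, the proposal carries out neither argument.
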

\begin{proof}
The first assertion is \eqref{eq:concrete-sum}.
There are \(m\) terms in the ordinary trace.
By \eqref{eq:common},
\begin{equation}
\widetilde z_0=[\omega^i][m]\widetilde{P'}^{Fr_p}.
 \label{eq:z0reduction}
 \end{equation}
Since coordinate Frobenius commutes with the group law and sends \(\omega\) to \(\omega^2\), $[\sqrt{-3}]\circ Fr_p=Fr_p\circ [-\sqrt{-3}]$. So
 \begin{equation}
[\sqrt{-3}]\widetilde z_0
 =-[\omega^i]
       \bigl([\sqrt{-3}m]\widetilde{P'}\bigr)^{Fr_p}\ne O.
\label{eq:z0detector}
\end{equation}
The last inequality follows from \eqref{eq:P0nonzero},
injectivity of Frobenius, and the fact that multiplication
by \(\omega\) is an automorphism.
Thus \(z_0\) is three-torsion
outside the \(\sqrt{-3}\)-kernel.

The three-division polynomial on \(E_\varpi\) is
\(3x(x^3+\varpi^2)\).
The six points of \(E_\varpi[3]\setminus E_\varpi[\sqrt{-3}]\)
are therefore
\[
(-\omega^i t^2,\ \pm\varpi \sqrt{-3}/2),\qquad i=0,1,2.
\]
At such a point, \(y+\varpi/2\) equals
\(-\varpi\omega^2\) or \(-\varpi\omega\).
Since \(\varpi=t^3\) and \(-1\) are cubes in \(L_{(\varpi)}\),
its class mod cubes is \(\omega^2\) or \(\omega\).
Neither \(\omega\) nor \(\omega^2\) is a cube in
\(L_{(\varpi)}\).
Otherwise that cubic field would contain a primitive
ninth root of unity and would equal \(K(\zeta_9)\).
But \(L_{(\varpi)}/K\) ramifies at \((\varpi)\),
whereas \(K(\zeta_9)/K\) does not.
If \(z_0=[\sqrt{-3}](X,Y)\) for a point in \(E_\varpi(L_{(\varpi)})\),
then \(X\ne0\), since the origin and the points with \(X=0\)
are sent to \(O\). Formula \eqref{eq:cube-pullback}, now
with replacing $q$ by \(\varpi\), would make \(y(z_0)+\varpi/2\) a cube.
This contradiction proves the last assertion of
\eqref{eq:z0-three}.
\end{proof}

\section{Proof of the main result}
\label{sec:non-torsion}

Let $z_1, z_2$ be the points in \eqref{eq:z1}, \eqref{eq:z2}.
\begin{proposition}\label{lem:three}
Both \(z_1\) and \(z_2\) are nonzero.
\end{proposition}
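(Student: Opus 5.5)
The plan is to argue by contradiction. Assuming $z_1=O$ (resp.\ $z_2=O$), I will show that $z_0$ becomes $[\sqrt{-3}]$-divisible over $L_{(\varpi,p)}$. I will then descend that divisibility to $L_{(\varpi)}$ using the cube-class criterion \eqref{eq:cube-pullback} and a valuation at $p$, which contradicts Proposition \ref{prop:z0}. The two cases are symmetric under exchanging $\omega\leftrightarrow\omega^2$, so I treat $z_1=O$.

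\emph{Step 1: linear algebra in the eigen-decomposition.} Since $1+\omega+\omega^2=0$, the definitions \eqref{eq:z0}, \eqref{eq:z1}, \eqref{eq:z2} give
\[
z_0+z_1+z_2=[3]z .
\]
If $z_1=O$, then $z_0+z_2=[3]z$. Apply $\sigma_p$, using $z_0^{\sigma_p}=z_0$ and \eqref{eq:eigen}, to get $z_0+[\omega^2]z_2=[3]z^{\sigma_p}$. Subtracting the two relations gives
\[
[1-\omega^2]z_2=[3](z-z^{\sigma_p}).
\]
Since $3=-\omega^2(1-\omega)^2$ and $1-\omega^2=-\omega^2(1-\omega)$, this says $[1-\omega]\bigl(z_2-[1-\omega](z-z^{\sigma_p})\bigr)=O$. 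Hence
\[
z_2=[1-\omega]W_1+T \quad\text{with } W_1\in E_\varpi(L_{(\varpi,p)}),\ T\in E_\varpi[\sqrt{-3}].
\]
Therefore $z_0=[3]z-z_2=[\sqrt{-3}]W-T$ for some $W\in E_\varpi(L_{(\varpi,p)})$; the unit discrepancy between $1-\omega$ and $\sqrt{-3}$ is absorbed by an automorphism $[\omega^j]$.

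To remove $T$, I use the explicit list of $E_\varpi[3]\setminus E_\varpi[\sqrt{-3}]$ from the proof of Proposition \ref{prop:z0}. Those points have coordinates in $L_{(\varpi)}$, and $[\sqrt{-3}]$ maps $E_\varpi[3]$ onto $E_\varpi[\sqrt{-3}]$. So $T=[\sqrt{-3}]T'$ with $T'$ rational over $L_{(\varpi)}$, and we conclude $z_0\in[\sqrt{-3}]E_\varpi(L_{(\varpi,p)})$.

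\emph{Step 2: descent via cube classes.} As in the proof of Proposition \ref{prop:z0}, the preimage point has $X\ne0$, and \eqref{eq:cube-pullback} with $q$ replaced by $\varpi$ shows that $y(z_0)+\varpi/2$ is a cube in $L_{(\varpi,p)}=L_{(\varpi)}(\sqrt[3]{p})$. By Kummer theory, an element of $L_{(\varpi)}^\times$ that becomes a cube in $L_{(\varpi)}(\sqrt[3]p)$ lies in $p^{j}L_{(\varpi)}^{\times3}$ for some $j\in\{0,1,2\}$. From the proof of Proposition \ref{prop:z0}, and using $[3]z_0=O$, $[\sqrt{-3}]z_0\ne O$, the class of $y(z_0)+\varpi/2$ in $L_{(\varpi)}^\times/L_{(\varpi)}^{\times3}$ is $\omega$ or $\omega^2$. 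Thus $\omega^{\pm1}p^{-j}$ would be a cube in $L_{(\varpi)}$. The case $j=0$ is excluded exactly as in Proposition \ref{prop:z0}. For $j\ne0$, note that $p$ is inert in $K$ and $L_{(\varpi)}/K$ is unramified at $p$ (see the end of \S2.1). So the normalized valuation at the prime above $p$ gives $v(\omega^{\pm1}p^{-j})=-j\not\equiv0\pmod 3$, which is impossible for a cube.

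\emph{Main obstacle.} Step 1 itself is routine, though it needs care with the unit factors $-\omega^2$ and with the torsion term $T$. I expect the main difficulty to be justifying the descent in Step 2 correctly. Divisibility by $[\sqrt{-3}]$ over $L_{(\varpi,p)}$ does not descend formally, because the obstruction lies in $H^1(\langle\sigma_p\rangle,E_\varpi[\sqrt{-3}])\cong\Z/3$. This is why I route the argument through the explicit cube-class map $P\mapsto y(P)+\varpi/2$ rather than through Galois cohomology. Because $p$ is unramified in $L_{(\varpi)}$, the extra Kummer classes $p^{\pm1}$ are detectable by valuation.
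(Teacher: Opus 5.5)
Your proof is correct, but it takes a longer route than the paper. The paper needs no descent, because it chooses the preimage so that it is already $\sigma_p$-invariant. Put $w_1=-[\omega]z^{\sigma_p}+[\omega^2]z^{\sigma_p^2}\in E_\varpi(L_{(\varpi,p)})$. A direct coefficient check gives $[\sqrt{-3}]w_1=z_0-z_1$ and $w_1^{\sigma_p}-w_1=[\omega^2]z_1$, and $w_2$ behaves the same way for $z_2$. So the single hypothesis $z_1=O$ makes $w_1$ a $\sqrt{-3}$-preimage of $z_0$ and also makes it fixed by $\sigma_p$. Hence $w_1\in E_\varpi(L_{(\varpi)})$, which contradicts the third assertion of Proposition~\ref{prop:z0} immediately. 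The obstruction in $H^1(\langle\sigma_p\rangle,E_\varpi[\sqrt{-3}])$ that you name as the main difficulty is, for this preimage, measured exactly by $z_1$, so it never arises.

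Your Step 1 works only modulo $\sqrt{-3}$-torsion, through $W_1=z-z^{\sigma_p}$ and the correction $T=[\sqrt{-3}]T'$, and this loses the $\sigma_p$-equivariance. Step 2 then has to recover it using extra arithmetic input:
\begin{itemize}
\item Kummer theory for $L_{(\varpi)}(\sqrt[3]{p})/L_{(\varpi)}$;
\item the explicit cube class $\omega^{\pm1}$ of $y(z_0)+\varpi/2$, which needs the first two assertions of Proposition~\ref{prop:z0} and not only the third;
\item the fact that $L_{(\varpi)}$ is unramified at $p$, so that the Kummer classes $p^{j}$ with $j\neq0$ are ruled out by valuation.
\end{itemize}
Each of these steps checks out, including $X\neq0$ for the preimage and the absorption of the unit $-\omega^2$.

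What your route gains is a stronger intermediate fact, $z_0\notin[\sqrt{-3}]E_\varpi(L_{(\varpi,p)})$, after which the linear algebra can be done loosely. What the paper's route gains is a three-line argument that uses only $z_0\notin[\sqrt{-3}]E_\varpi(L_{(\varpi)})$ and no further arithmetic of $p$.
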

\begin{proof}
Use actual points on \(E_\varpi\):
\[
w_1=-[\omega]z^{\sigma_p}
       +[\omega^2]z^{\sigma_p^2},\qquad
w_2=[\omega^2]z^{\sigma_p}
       -[\omega]z^{\sigma_p^2}.
\]
The relations \(\omega^2+\omega+1=0\),
\(\sqrt{-3}\omega=\omega^2-1\), and \(\sqrt{-3}\omega^2=1-\omega\)
give
\begin{align}
[\sqrt{-3}]w_1&=z_0-z_1,
&
w_1^{\sigma_p}-w_1&=[\omega^2]z_1,
\label{eq:w1}\\
[\sqrt{-3}]w_2&=z_0-z_2,
&
w_2^{\sigma_p}-w_2&=-[\omega]z_2.
\label{eq:w2}
\end{align}
For example,
\[
[\sqrt{-3}]w_1=[1-\omega^2]z^{\sigma_p}
        +[1-\omega]z^{\sigma_p^2}=z_0-z_1,
\]
and
\[
w_1^{\sigma_p}-w_1
=[\omega^2]z+[\omega]z^{\sigma_p}+z^{\sigma_p^2}
=[\omega^2]z_1.
\]
The second pair follows by the same coefficient calculation.

If \(z_1=O\), equations \eqref{eq:w1} make \(w_1\)
fixed by \(\sigma_p\). Since \(w_1\) is defined over
\(L_{(\varpi,p)}\), it belongs to \(E_\varpi(L_{(\varpi)})\).
The other identity would give \([\sqrt{-3}]w_1=z_0\),
contradicting Proposition~\ref{prop:z0}.
The equations \eqref{eq:w2} give the identical contradiction
if \(z_2=O\). Thus both points are nonzero.
\end{proof}


\begin{lemma}\label{lem:formal}
Let an elliptic curve have good reduction over a finite
extension of \(\Q_p\) of absolute ramification index
\(e<p-1\). The kernel of reduction has no nonzero torsion.
\end{lemma}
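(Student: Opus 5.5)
The plan is to use the standard formal group argument. Let $F/\Q_p$ be the finite extension, with ring of integers $\OO_F$, maximal ideal $\mathfrak m$, normalized valuation $v$ (so $v(p)=e$), and let $\hat E$ be the formal group of a minimal (smooth) model over $\OO_F$. The kernel of reduction $E_1(F)$ is identified with $\hat E(\mathfrak m)$ via $P\mapsto -x/y$. A nonzero torsion point of the kernel has some prime order $\ell$, since some multiple of it has prime order and still lies in the kernel. So it suffices to show that $\hat E(\mathfrak m)$ has no element of prime order $\ell$.

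The first step is the case $\ell\ne p$. The series $[\ell](T)=\ell T+(\text{higher terms})$ has unit linear coefficient. Hence $[\ell]$ is an automorphism of $\hat E(\mathfrak m)$, because its inverse series exists over $\OO_F$. In particular it is injective, so there is no $\ell$-torsion.

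The second step is the case $\ell=p$. Suppose $z\in\mathfrak m$ is nonzero with $[p](z)=0$, and write $[p](T)=pT+\sum_{n\ge2}a_nT^n$. The key fact is that every coefficient $a_n$ with $p\nmid n$ is divisible by $p$. Moreover, $[p](T)=g(T^p)+pT\cdot(\ldots)$, where the part not divisible by $p$ has leading degree at least $p$, since the reduction of $[p]$ is a power series in $T^{p^h}$. The estimate then runs as follows. Put $v(z)=s\ge1$. The term $pz$ has valuation exactly $e+s$. Every other term has valuation either at least $e+2s>e+s$ (terms $a_nz^n$ with $p\mid a_n$, $n\ge2$) or at least $ps$ (terms with unit coefficient, which occur only in degrees $n\ge p$). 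For the sum to vanish, two minimal valuations must coincide, which forces $e+s\ge ps$, that is, $s\le e/(p-1)<1$. This contradicts $s\ge1$.

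Equivalently, one can invoke the standard fact (Silverman, IV.6.1) that a torsion point of exact order $p^k$ in $\hat E(\mathfrak m)$ has $v(z)\le e/(p^k-p^{k-1})$. With $e<p-1$ this bound is less than $1$, which is impossible for $z\in\mathfrak m$.

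The main obstacle is justifying the shape of $[p](T)$: it is $\equiv$ a series in $T^p$ modulo $p$, i.e. $[p](T)=pf(T)+g(T^p)$ with $f(T)=T+\cdots$. I would cite this directly from Silverman (Proposition IV.2.3 and Theorem IV.6.1) rather than reprove it. The fact that we have good reduction plays no role beyond making the kernel of reduction equal to $\hat E(\mathfrak m)$ for the smooth model.

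In the application, \eqref{eq:e} gives $e=3$, and $p\equiv8\pmod9$ forces $p\ge17$, so $e<p-1$ holds.
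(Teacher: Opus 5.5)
Your proposal is correct and follows the same route as the paper, which cites Silverman: Proposition VII.2.2 identifies the kernel of reduction with $\hat E(\mathfrak m)$, Proposition IV.3.2 rules out prime-to-$p$ torsion, and Theorem IV.6.1 gives the bound $v(z)\le e/(p^k-p^{k-1})<1$, which you both cite and reprove. One small point: your valuation estimate needs the integral decomposition $[p](T)=pf(T)+g(T^p)$ over $\OO_F$ (which comes from the invariant differential); knowing only the shape of $[p]$ modulo $\mathfrak m$ is not enough, because for $e>1$ it would place the low-degree coefficients in $\mathfrak m$ rather than in $p\OO_F$.
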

\begin{proof}
This is the consequence of \cite[Proposition VII.2.2, Proposition IV.3.2 and Theorem IV.6.1]{Silverman}.
%
\end{proof}

\begin{proof}[Proof of Theorem \ref{thm:main}]
The extension \(L_{(\varpi,p)}/L_{(\varpi)}\)
is totally ramified at \(p\), so the three points
\(z,z^{\sigma_p},z^{\sigma_p^2}\) have the same reduction.
Equations \eqref{eq:z1}--\eqref{eq:z2} imply
\[
\widetilde z_1=\widetilde z_2=O,
\]
since \(1+\omega+\omega^2=0\).
The curve \(E_\varpi\) has good reduction at \(p\),
and \eqref{eq:e} gives \(e=3<p-1\), as \(p\ge17\).
Lemma~\ref{lem:formal} applies over the completion of
\(L_{(\varpi,p)}\).
Both \(z_1\) and \(z_2\) are nonzero by Proposition~\ref{lem:three},
so each has infinite order.
\end{proof}

\section{The example \texorpdfstring{\(p=17,\ q=13\)}{p=17, q=13}}
\label{sec:example}

Use the example in \cite[Section 10, example (2)]{Yin}:
\[
\varpi=4+3\omega,\quad\bar\varpi=1-3\omega,\quad
t^3=\varpi,\quad r=23,\quad
\tau_r=-\frac1{3(\omega+23)}.
\]
Indeed \(r^2-r+1=507=13\cdot39\).
The normalized images are
\[
P'=\left((-7-2\omega)t^{-2},\frac{7+9\omega}{2}\right),
\qquad \varphi^c(\tau_r)=(0,\varpi/2),
\qquad F_+(\tau_r)=1.
\]
Thus \(\epsilon=1,u=1\) in \eqref{eq:base-cube}, and
\begin{equation}\label{eq:example-B}
B=\left(-7-2\omega,\frac{1+30\omega}{2}\right)\in E_{13}(K),
\qquad y(B)+13/2=\varpi^2.
\end{equation}

Over \(k=\F_{17}[\omega]/(\omega^2+\omega+1)\).
The reduction of \(E_{13}\) is \(y^2=x^3+4\), and
\[
\widetilde B=(10+15\omega,\ 9+15\omega).
\]
then 
\[
[6]\widetilde B=(16+16\omega,\ 2+4\omega).
\]
Now \(\sqrt{-3}=\omega(1-\omega)\), so
\[
[\sqrt{-3}][6]\widetilde B
=[\omega]\bigl([6]\widetilde B-[\omega][6]\widetilde B\bigr)
=(0,15)=(0,-2)\ne O.
\]
This is the concrete nonzero point in
\eqref{eq:Bnonzero}. Here \([H_{17}:K]=6\),.

\begin{equation}\label{eq:P}
  P=\varphi^c(17\tau_r)
   =\left(\frac{t^2\mathcal{A}(\sqrt[6]{17})}{6},\frac{\varpi \mathcal{B}(\sqrt[6]{17})}{6}\right)
   \in E_\varpi(H_{17}K(t)).
\end{equation}
where
\begin{align}
  \mathcal{A}(T)={}&43+55\omega+(32+4\omega)T+(5-16\omega)T^2\notag\\
         &-(11+13\omega)T^3-(8+2\omega)T^4+(-1+4\omega)T^5,
         \label{eq:A}\\
  \mathcal{B}(T)={}&-93-377\omega-(237+177\omega)T+(-111+35\omega)T^2\notag\\
         &+(24+93\omega)T^3+(57+43\omega)T^4+(27-9\omega)T^5.
         \label{eq:C}
\end{align}
The final traces are
\begin{align}
  {z_0=\left(-t^2,\frac{\varpi \sqrt{-3}}{2}\right),}\\
  {z_1=\left(-\frac{7t^2}{\sqrt[3]{p}^2},-\frac{19\varpi \sqrt{-3}}{34}\right),}\\
  {z_2=\left(-\frac{31t^2}{\sqrt[3]{p}^4},\frac{109\varpi \sqrt{-3}}{578}\right).}
\end{align}
In particular, $z_0\in E_\varpi[3]\setminus E_\varpi[\sqrt{-3}]$.

\bibliographystyle{plain}

\end{document}